\documentclass[12pt]{article}
\usepackage{amsmath,amssymb,amsthm,bbm,color,xcolor}
\usepackage[round]{natbib}
\usepackage[margin=1in]{geometry}
\usepackage{graphicx}
\usepackage{mathrsfs}
\RequirePackage[colorlinks,citecolor={blue!60!black},urlcolor={blue!70!black},linkcolor={red!60!black},breaklinks,hypertexnames=false]{hyperref}

\newtheorem{theorem}{Theorem}
\newtheorem{lemma}[theorem]{Lemma}
\newtheorem{proposition}[theorem]{Proposition}

\newtheorem{corollary}[theorem]{Corollary}

\newtheorem{example}{Example}

\newtheorem{innercustomexample}{Example}
\newenvironment{customexample}[1]
  {\renewcommand\theinnercustomexample{#1}\innercustomexample}
  {\endinnercustomexample}

\newcommand{\R}{\mathbb{R}}

\newcommand{\EE}[1]{\mathbb{E}\left[{#1}\right]}
\newcommand{\EEst}[2]{\mathbb{E}\left[{#1}\  \middle| \ {#2}\right]}
\newcommand{\Ep}[2]{\mathbb{E}_{{#1}}\left[{#2}\right]}

\newcommand{\PP}[1]{\mathbb{P}\left\{{#1}\right\}}

\newcommand{\Ppst}[3]{\mathbb{P}_{{#1}}\left\{{#2}\  \middle| \ {#3}\right\}}
\newcommand{\Pp}[2]{\mathbb{P}_{{#1}}\left\{{#2}\right\}}

\newcommand{\One}[1]{{\mathbbm{1}}\left\{{#1}\right\}}
\newcommand{\one}[1]{{\mathbbm{1}}_{{#1}}}
\newcommand{\iidsim}{\stackrel{\textnormal{iid}}{\sim}}

  \newcommand\independent{\protect\mathpalette{\protect\independenT}{\perp}}
\def\independenT#1#2{\mathrel{\rlap{$#1#2$}\mkern2mu{#1#2}}}

\newcommand{\alg}{\mathcal{A}}
\newcommand{\cZ}{\mathcal{Z}}
\newcommand{\cW}{\mathcal{W}}
\newcommand{\cH}{\mathcal{H}}

\newcommand{\cI}{\mathcal{I}}
\newcommand{\dtv}{\textnormal{d}_{\textnormal{TV}}}
\newcommand{\bfa}{\mathbf{a}}

\title{Algorithmic stability via ensembling}
\author{Rina Foygel Barber\thanks{University of Chicago, \texttt{rina@uchicago.edu}} \and Richard J.\ Samworth\thanks{University of Cambridge, \texttt{r.samworth@statslab.cam.ac.uk}}}
\begin{document}
\maketitle

\begin{abstract}
    Algorithmic stability refers to the property of an algorithm being insensitive to perturbations of the input data, where the type of perturbation may vary depending on the setting. In this work, we develop a general framework to quantify the extent to which any ensembling strategy defined via averaging can yield stability guarantees for any type of data perturbation.
    Our main theoretical result is a guarantee on the stability of this ensembled algorithm, given in terms of the norm of a certain covariance operator that describes the ensembling process.  
    We show how our general framework yields interpretable and intuitive insights in several examples of perturbations of practical interest, and provides much sharper guarantees than those obtained from privacy considerations.
\end{abstract}

\section{Introduction}\label{sec:intro}

In modern data analysis, there are many factors that play a role in choosing an algorithm for learning from data: we may wish to select an algorithm based on features such as accuracy (in recovering some underlying model or parameter), computational cost, privacy, robustness to outliers or to model errors, and many more. In this work, we are interested in the property of \emph{algorithmic stability}, which measures the extent to which the output of an algorithm is sensitive to small perturbations of the training data; in particular, in the statistics literature, stability is commonly defined by considering the change in the output when deleting, or resampling, small amounts of the training data, but other types of perturbations have also been considered. Stability is widely considered to be a fundamental desirable property of learning algorithms \citep{meinshausen2010stability,yu2013stability,shah2013variable,murdoch2019definitions}, and is closely connected with other statistical aims and properties such as generalization, learnability, reproducibility, and robustness \citep{mukherjee2006learning,shalev2010learnability,yu2020veridical}. 

Certain types of algorithms are provably stable---for instance, algorithms that are defined by strongly convex regularized optimization problems, where the regularization enforces low sensitivity to perturbations \citep{bousquet2002stability,wibisono2009sufficient}. While the algorithms used in practice in many applications are often more complex, empirically they may appear to exhibit stability, but it is known to be impossible to empirically certify stability properties when data are limited \citep{kim2021black,luo2024algorithmic}. 

Consequently, a recent question of interest is whether it is possible to introduce a post-hoc correction to an arbitrary base algorithm, enforcing stability without losing its favorable performance. The recent work of \citet{soloff2024bagging,soloff2024stability} shows that \emph{bagging}\footnote{This term, which is an abbreviation of `bootstrap aggregating', was coined by \citet{breiman1996bagging}.} (that is, repeatedly resampling the data, and then averaging the algorithm's output) ensures a guarantee of stability in the sense of low sensitivity to deleting small amounts of data. In this work, we develop a framework to study this phenomenon more generally: given any base algorithm, and any desired notion of stability (i.e., stability with respect to some sort of perturbation), can we use some form of ensembling in order to ensure that this stability property will hold?

\subsection{Algorithmic stability and algorithm ensembling}\label{sec:intro_stability_ensembling}
Let $\alg$ be a map that inputs data $z\in\cZ$ and returns an output $\alg(z)\in[0,1]$. We would like to assess whether $\alg$ is sensitive to perturbations of the input data.  To motivate this question, we now introduce several key examples that we will study throughout the paper. First, we might consider stability with respect to dropping a single data point:
\begin{example}[Deleting one data point]\label{example1}
Let $z = (z_1,\dots,z_n)$ be a collection of training data points, and write $z_{-i} = (z_1,\dots,z_{i-1},z_{i+1},\dots,z_n)$ to denote the same data set when the $i$th data point has been deleted. Then the stability of $\alg$, with respect to deleting a data point from the training dataset $z$, can be summarized by computing
\begin{equation}
\label{Eq:DeletionStability}
\frac{1}{n}\sum_{i=1}^n \big(\alg(z) - \alg(z_{-i})\big)^2,
\end{equation}
which is the average change in the output of $\alg$ when a single data point is deleted at random from $z$.
\end{example}
\noindent In this example, even though the data vector $z$ is fixed (and might be chosen adversarially), the quantity~\eqref{Eq:DeletionStability} nonetheless captures an `average' rather than `worst-case' notion of stability, since the data value $z_i$ being deleted is being chosen at random from $i\in[n]$.

Second, it is common to consider stability with respect to data corruption:
\begin{example}[Corrupting part of the data]\label{example2} For a random data vector $Z\in\R^n$, we can define a perturbed version of~$Z$ by replacing a randomly chosen subset of data point with contaminated or corrupted values. Specifically, suppose that we select a subset $S\subseteq[n]$ at random, with $|S|=s$, and each entry $Z_i$ for $i\in S$ is replaced by some corrupted value $\zeta_i$: we write
\[Z\circ \mathbf{1}_{S^c} + \zeta\circ \mathbf{1}_S\]
to denote the corrupted data vector, where $\mathbf{1}_S\in\{0,1\}^n$ is the indicator vector for the set $S$ (and similarly for $\mathbf{1}_{S^c}$), and $\circ$ denotes elementwise product. Then the quantity
\[
\frac{1}{\binom{n}{s}}\sum_{\substack{S\subseteq[n]\\|S|=s}}\Ep{Z\sim \pi,  \zeta\sim \nu}{\Big(\alg(Z) - \alg\big(Z\circ \mathbf{1}_{S^c} + \zeta\circ \mathbf{1}_S\big)\Big)^2}
\]
characterizes the stability of $\alg$ in this setting, with respect to randomly sampled data drawn as $Z\sim\pi$, and corruptions drawn as $\zeta\sim\nu$, with the indices $S$ of the corrupted entries chosen uniformly at random.
\end{example}

It is natural to ask how one might make an algorithm more stable to this type of perturbation, and bagging provides one common approach.  Here we present two common variants \citep{breiman1996bagging,andonova2002simple}:
\begin{customexample}{A}[Bagging]\label{exampleA}
    For constructing the bagged version of an algorithm $\alg$, we may average the output of $\alg$ with respect to drawing a random subset of size~$m$, sampled uniformly without replacement from $[n]$ (also called `subbagging'):
    \[\alg_{\textnormal{bag}}(z) =  \frac{1}{n!/(n-m)!}\sum_{\substack{i_1,\dots,i_m\in[n]\\ \textnormal{s.t.\ $i_k\neq i_\ell$ $\forall$ $k\neq \ell$}}}\alg\big((z_{i_1},\dots,z_{i_m})\big).\]
        Alternatively, we may sample $m$ indices uniformly with replacement (`the $m$ out of $n$ bootstrap'): 
    \[\alg_{\textnormal{bag}}(z) =  \frac{1}{n^m}\sum_{i_1,\dots,i_m\in[n]} \alg\big((z_{i_1},\dots,z_{i_m})\big).\]
\end{customexample}

Another common technique for increasing stability is to `smooth' the algorithm by averaging over noisy versions of the data:
\begin{customexample}{B}[Adding noise]\label{exampleB}  Given data $z\in\R^n$, we define an ensembled algorithm by averaging over noisy versions of the data, $z+\xi$, where $\xi\in\R^n$ is a vector of noise, with entries drawn i.i.d.\ from some density $h$ on $\R$. This leads to a smoothed algorithm:
\[\alg_{\textnormal{smth}}(z) = \Ep{\xi_i\iidsim h}{\alg(z+\xi)}.\]
\end{customexample}
The goal of this paper is to study how ensembling strategies, such as bagging or averaging over added noise as in Examples~\ref{exampleA} and~\ref{exampleB}, relate to the stability properties of the resulting ensembled algorithms, such as stability with respect to deleting or corrupting data as in Examples~\ref{example1} and~\ref{example2}. As we will see in the overview of related work below, some existing literature has established this type of connection for specific examples; in this paper, our aim is to build a broader and more general understanding of the connection between ensembling and stability.

\subsection{Related work}

Stability has been extensively studied in the statistics and learning theory literature.  It is often viewed as a desirable algorithmic property in its own right, connected to qualitative goals such as interpretability or trustworthiness \citep{breiman1996heuristics,yu2013stability}.
\citet{yu2020veridical} discuss the necessity of stability with respect to many types of perturbations or choices made by the analyst, including variability of the data, randomness inherent to the procedure, and decisions made in constructing the model or in data preprocessing.

From a theoretical standpoint, the most commonly studied notion of stability is that of stability with respect to deleting---or resampling---a single data point or a small fraction of the data (as in Example~\ref{example1}); see \citet{bousquet2002stability} for background. This type of stability condition is known to be sufficient for establishing properties such as generalization \citep{bousquet2002stability,poggio2004general}, learnability \citep{shalev2010learnability}, asymptotic convergence of cross-validation-based estimates of risk \citep{austern2020asymptotics,bayle2020cross}, avoidance of sample splitting \citep{lundborg2024projected}, and validity of cross-validation-based predictive inference \citep{barber2021predictive,amann2023assumption}. \citet{chakraborty2026stability} examine the tradeoff between stability and accuracy via a constrained minimax framework.

While stability with respect to data deletion or resampling has been extensively studied, a broad range of different notions of stability (under various names) have been proposed in statistics and in related fields. Here we mention several examples. The field of robust statistics examines the sensitivity of statistical procedures to data contamination or outliers; see \citet{huber2011robust} for background. In Bayesian statistics, \emph{prior sensitivity analysis} refers to examining whether the conclusions (the posterior) are stable with respect to various choices of the prior \citep{gelman1995bayesian}. In optimization, it is common to study stability with respect to the choice of initialization point, or with respect to a perturbation to the optimization problem---for instance, \citet{poliquin1998tilt} study the \emph{tilt stability} of a minimization problem, i.e., the perturbation to the solution that results from adding a random linear term (a `tilt') to the function being minimized. 

There are many statistical tools and procedures that have been developed to improve stability (and other properties) empirically, most notably bagging \citep{breiman1996bagging} and subbagging \citep{andonova2002simple}, as described in Example~\ref{exampleA}. Bagging was introduced as a tool for constructing stable models by \citet{breiman1996bagging,breiman1996heuristics}, and plays a key role in statistics and machine learning, underlying many common procedures, such as random forests \citep{breiman2001random}.  In certain situations, it is known to improve performance by reducing variance \citep{buhlmann2002analyzing,hall2005properties,biau2010rate,samworth2012optimal}.  As mentioned earlier, the work of \citet{soloff2024bagging,soloff2024stability} establishes that bagging any base algorithm leads to a particular type of stability guarantee, namely, stability with respect to deleting one data point (Example~\ref{example1}); we will give details on these results in Section~\ref{sec:example1+A}.

\subsection{Our contributions}

In Section~\ref{sec:framework}, we introduce definitions that quantify the stability of an algorithm to an arbitrary notion of data perturbation, and construct an ensembled version of the algorithm that averages outputs over data drawn using a Markov kernel (or `channel'), generalizing the various special cases introduced in the examples above. 

Our main theoretical results are presented in Section~\ref{sec:main_results}, providing an upper bound on the stability parameter of the ensembled algorithm, given in terms of a certain operator norm of the covariance operator derived from a kernel associated with the channel. (This bound is presented for algorithms taking values in the unit interval $[0,1]$ but can be extended via Grothendieck's inequality to more general settings where the algorithm takes values in a real, separable Hilbert space, e.g., algorithms that return a fitted function---see Appendix~\ref{sec:HSExtension}.)
Section~\ref{Sec:Examples} is devoted to several examples that showcase the utility of our general framework.  In each case, we show how our theory yields interpretable and intuitive bounds on the stability parameter.

In general, one way of ensuring stability is via privacy: a sufficiently noisy channel will make the differences between original and perturbed data undetectable.  In Section~\ref{Sec:Privacy}, we explore the connections between stability and privacy in detail, reaching the conclusion that our theory provides much tighter bounds on stability than those implied via privacy.  We end with a discussion of related ideas and possible extensions in Section~\ref{Sec:Discussion}.  Most proofs are deferred to Appendix~\ref{sec:proofs_appendix}.

\section{Framework}\label{sec:framework}

In this section, we will develop a unified framework and notation for describing stability with respect to perturbations (for instance, perturbations such as deleting data or corrupting data, as described in Examples~\ref{example1} and~\ref{example2} above), and for developing a generic notion of ensembled algorithms (for example, bagging as in Example~\ref{exampleA}, or averaging with respect to added noise as in Example~\ref{exampleB}).  

\subsection{A general definition of stability}
\label{Subsec:GeneralDef}

We begin by introducing notation that will allow us to generalize to an arbitrary notion of data perturbation.  Assume the data $z$ lies in some measurable space $\cZ$ and let $P$ be a distribution on $\cZ\times\cZ$. In a random pair $(Z,Z')\sim P$, $Z$ represents the original unperturbed data, while $Z'$ is its perturbed version. Our goal is to guarantee that our algorithm returns similar outputs when trained on $Z$ versus on $Z'$, when we sample $(Z,Z')\sim P$.

To return to our motivating examples:
\begin{customexample}{\ref{example1}}[Deleting one data point]
    Given a data vector $z=(z_1,\dots,z_n)$, define 
    \[P = \frac{1}{n}\sum_{i=1}^n \delta_{(z,z_{-i})}.\]
    This is the uniform distribution over all possible pairs $(z,z_{-i})$---that is, the original data $z$, and the perturbed data $z_{-i}$ obtained by deleting one data point $i\in[n]$, with $i\in[n]$ chosen at random, as described in Example~\ref{example1}.
\end{customexample}

\begin{customexample}{\ref{example2}}[Corrupting part of the data]
    For the corrupted data model described in Example~\ref{example2} above, we can define $P$ as the distribution of
    \[\big(Z,Z\circ \mathbf{1}_{S^c} + \zeta\circ \mathbf{1}_S\big)\]
    that is induced by sampling $Z,\zeta,S$ independently as
    \[Z\sim \pi,\quad \zeta\sim\nu, \quad S\sim\textnormal{Unif}\left(\binom{[n]}{s}\right),\]
    where $\binom{[n]}{s}$ denotes the collection of all subsets of $[n]$ with cardinality $s$.
\end{customexample}

Given a joint distribution $P$, we define the \emph{stability parameter}
\[\beta^2_P(\alg) =\Ep{(Z,Z')\sim P}{\big(\alg(Z) - \alg(Z')\big)^2},\]
where the expectation is taken with respect to randomly sampling $(Z,Z')\sim P$, i.e., the original data $Z$ and its perturbed version $Z'$.
Intuitively, an algorithm $\alg$ is \emph{stable} to perturbations if this quantity is small. But depending on the nature of the algorithm, this may not immediately be the case. The aim of ensembling is to \emph{stabilize} the algorithm $\alg$: we aim to modify the algorithm in such a way that this stability parameter is guaranteed to be small.

\subsection{Defining an ensembled algorithm}
\label{Subsec:Ensembled}

As discussed in Section~\ref{sec:intro_stability_ensembling}, to help ensure that an algorithm $\alg$ achieves some notion of stability, it is common to modify $\alg$ by taking an average over perturbations of the data. We have seen two specific mechanisms that are commonly used: subsampling data (Example~\ref{exampleA}) and adding noise to data (Example~\ref{exampleB}).

To work within a more general framework that captures these existing mechanisms as well as other possible notions of ensembling, we now introduce some additional notation. Let $Q$ denote a Markov kernel, with $Q(\cdot \mid z)$ defining a distribution on $\cZ$, for each $z\in\cZ$. 
Given a choice of~$Q$, we define
\begin{equation}\label{eqn:define_alg_nu_z}\alg_Q(z) = \Ep{Z'\sim Q(\cdot\mid z)}{\alg(Z')}.\end{equation}
We can think of this as an ensembled version of the algorithm $\alg$, obtained by averaging over the draw of the perturbed data $Z'\sim Q(\cdot \mid z)$.\footnote{In practice, this expected value would likely be estimated with Monte Carlo sampling, by averaging over a large finite number of draws from $Q(\cdot\mid z)$, but we do not examine this extension here.} Borrowing terminology from the fields of information theory and differential privacy, we can refer to $Q$ as a `channel' that converts data $z$ into a randomly sampled output drawn from $Q(\cdot\mid z)$. Since $Q$ defines an ensembled version of the algorithm $\alg$, we will call it the \emph{ensembling channel}.

To return to our earlier examples:
\begin{customexample}{\ref{exampleA}}[Bagging]
    Let $z = (z_1,\dots,z_n)$ denote the data. For subbagging (sampling uniformly without replacement), we define    
    \[Q(\cdot\mid z) = \frac{1}{n!/(n-m)!}\sum_{\substack{i_1,\dots,i_m\in[n]\\ \textnormal{s.t.\ $i_k\neq i_\ell$ $\forall$ $k\neq \ell$}}}\delta_{(z_{i_1},\dots,z_{i_m})}.\]
    For bootstrapping (sampling uniformly with replacement), we instead have
     \[Q(\cdot\mid z) = \frac{1}{n^m}\sum_{i_1,\dots,i_m\in[n]}\delta_{(z_{i_1},\dots,z_{i_m})}.\]
\end{customexample}

\begin{customexample}{\ref{exampleB}}[Adding noise]
For adding noise $\xi$ with entries sampled i.i.d.\ from some density $h$ on $\R$, we define the ensembling channel $Q$ via the conditional density
\[Q(w\mid z) = \prod_{i=1}^n h(w_i - z_i).\]
\end{customexample}

\section{Main results}\label{sec:main_results}

The aim of our theoretical analysis is to derive a simple and assumption-lean bound on the stability parameter for the ensembled algorithm $\alg_Q$, i.e., $\beta^2_P(\alg_Q)$, working in the general framework defined above. 
Intuitively, in order for this quantity to be small, we need to choose the channel $Q$ to be sufficiently noisy so as to ``mask'' the effect of the perturbation, i.e., the difference between data $Z$ and its perturbed version $Z'$ when $(Z,Z')\sim P$.

As mentioned above, such bounds have already been established in the specific setting of deleting a small portion of the data: it is known that bagging (i.e., the ensembling mechanism defined in Example~\ref{exampleA}) leads to stability with respect to deleting data (i.e., the notion of stability defined in Example~\ref{example1}). However, in this work our aim is to develop a general theory that will enable stability bounds to be proved more broadly, and will also offer new insights on the reason that bagging offers stability in the setting of Example~\ref{example1}.

\subsection{Preliminaries}
We will assume that the ensembling channel $Q$ is chosen such that $Q(\cdot\mid z)$ has a density with respect to some $\sigma$-finite base measure $\mu$ on $\cZ$, for every $z\in\cZ$ (for instance, the counting measure for the discrete setting of Example~\ref{exampleA}, or the Lebesgue measure on $\R^n$ for Example~\ref{exampleB}). Overloading notation, we will use $Q(\cdot\mid z)$ to denote the density or the distribution, as appropriate.

First we define a kernel
\[K_{P,Q}(z,z') = \Ep{(Z_0,Z_1)\sim P}{\big(Q(z\mid Z_0) - Q(z\mid Z_1)\big)\cdot \big(Q(z'\mid Z_0) - Q(z'\mid Z_1)\big)}.\]
Note that $K_{P,Q}$ is positive semidefinite by construction.  Let $T_{P,Q}$ be the associated operator: for any bounded measurable function $f:\cZ\to\R$, define $T_{P,Q}f:\cZ\to\R$ as
\[
[T_{P,Q}f](z) = \int_{\cZ} K_{P,Q}(z,z') f(z')\;\mathsf{d}\mu(z').
\]
The following lemma gives an interpretation of this operator:
\begin{lemma}\label{lem:operator_avg}
    Let $f,g:\cZ\to\R$ be bounded functions. Define their ensembled versions
    \[f_Q(z) = \Ep{Z\sim Q(\cdot\mid z)}{f(Z)}, \quad g_Q(z) = \Ep{Z\sim Q(\cdot\mid z)}{g(Z)}.\]
    Then
    \[\int_{\cZ} f(z)\cdot  [T_{P,Q}g](z)\;\mathsf{d}\mu(z) = \Ep{(Z_0,Z_1)\sim P}{\big(f_Q(Z_0)-f_Q(Z_1)\big)\cdot \big(g_Q(Z_0)-g_Q(Z_1)\big)}.\]
\end{lemma}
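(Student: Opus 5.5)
The identity is a direct computation: expand the definition of $T_{P,Q}$, exchange the order of integration via Fubini, and recognize the ensembled functions $f_Q$ and $g_Q$ inside the resulting integrals. First, write the left-hand side as a double integral,
\[
\int_{\cZ}\int_{\cZ} f(z)\,K_{P,Q}(z,z')\,g(z')\;\mathsf{d}\mu(z')\,\mathsf{d}\mu(z).
\]
Then substitute the definition of $K_{P,Q}$ as an expectation over $(Z_0,Z_1)\sim P$, which gives a triple integral over $z$, $z'$ and the law $P$.

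The main step is to move the expectation over $P$ to the outside. Once that is done, the integrand factorizes as a product of a function of $z$ and a function of $z'$. The inner integrals become
\[
\int_{\cZ} f(z)\big(Q(z\mid Z_0)-Q(z\mid Z_1)\big)\,\mathsf{d}\mu(z) = f_Q(Z_0)-f_Q(Z_1),
\]
and the analogous expression holds for $g$ with $z'$. This uses only the fact that $Q(\cdot\mid z)$ is a density with respect to $\mu$, so that $f_Q(w)=\int f(z)\,Q(z\mid w)\,\mathsf{d}\mu(z)$. Multiplying the two factors gives exactly the integrand on the right-hand side.

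The only real obstacle is justifying the interchange of integrals. I would handle it with Fubini--Tonelli by checking absolute integrability. The integrand is dominated by
\[
\|f\|_\infty\|g\|_\infty\,\big(Q(z\mid Z_0)+Q(z\mid Z_1)\big)\big(Q(z'\mid Z_0)+Q(z'\mid Z_1)\big).
\]
By Tonelli, integrating this bound over $z$ and $z'$ first gives $4\|f\|_\infty\|g\|_\infty$, since each density integrates to $1$. Its expectation under $P$ is therefore finite. Joint measurability of $(z,w)\mapsto Q(z\mid w)$ is implicit in treating $Q$ as a Markov kernel with densities, and I would state it as a standing assumption. With this domination in place, Fubini allows every reordering used above. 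The same bound also shows that $T_{P,Q}g$ is well defined and that the left-hand side is finite.
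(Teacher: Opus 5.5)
Your proposal is correct and follows the paper's proof essentially step for step. Like the paper, you expand $T_{P,Q}$, substitute the definition of $K_{P,Q}$, use Fubini to pull the expectation over $P$ outside, and factor the double integral into $\big(f_Q(Z_0)-f_Q(Z_1)\big)\big(g_Q(Z_0)-g_Q(Z_1)\big)$. Your explicit domination bound, which integrates to $4\|f\|_{L_\infty}\|g\|_{L_\infty}$, usefully supplies the absolute-integrability check that the paper leaves implicit when it simply invokes Fubini's theorem.
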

\noindent We also define an operator norm on $T_{P,Q}$ as
\[\|T_{P,Q}\|_{L_\infty\to L_1} = \sup\left\{\|T_{P,Q}f\|_{L_1} : \|f\|_{L_\infty} \leq 1\right\},\]
where for any measurable $f:\cZ\to\R$ we define $\|f\|_{L_1} = \int_{\cZ} |f(z)|\;\mathsf{d}\mu(z)$, and $\|f\|_{L_\infty} = \sup_{z \in \cZ} |f(z)|$.  By construction (and using the fact that $K_{P,Q}$ is positive semidefinite), we can equivalently write
\begin{equation}\label{eqn:T_norm_as_sup}
\|T_{P,Q}\|_{L_\infty\to L_1} = \sup\left\{ \int_{\cZ} \int_{\cZ} K_{P,Q}(z,z') f(z)f(z')\;\mathsf{d}\mu(z)\;\mathsf{d}\mu(z') : \|f\|_{L_\infty}\leq 1\right\}.
\end{equation}

\subsection{Stability guarantee}
\label{SubSec:StabGuarantee}

Our main result shows that the stability $\beta^2_P(\alg_Q)$ can be characterized by the norm of the associated operator $T_{P,Q}$.
\begin{theorem}\label{thm:main}
    For any $\alg:\cZ\to[0,1]$, it holds that
    \[\beta^2_P(\alg_Q) \leq \frac{1}{4}\|T_{P,Q}\|_{L_\infty\to L_1}.\]
\end{theorem}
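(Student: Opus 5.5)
The plan is to center the algorithm at $1/2$ and apply Lemma~\ref{lem:operator_avg} with $f=g$. Set $f(z) = \alg(z) - \tfrac12$, so that $\|f\|_{L_\infty}\le \tfrac12$ because $\alg$ takes values in $[0,1]$.

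Since each $Q(\cdot\mid z)$ is a probability distribution, the ensembled version of a constant is that same constant. Hence $f_Q = \alg_Q - \tfrac12$, and therefore
\[
f_Q(Z_0)-f_Q(Z_1) = \alg_Q(Z_0)-\alg_Q(Z_1).
\]
Taking $f=g$ in Lemma~\ref{lem:operator_avg} then gives
\[
\beta^2_P(\alg_Q) = \Ep{(Z_0,Z_1)\sim P}{\big(f_Q(Z_0)-f_Q(Z_1)\big)^2} = \int_{\cZ} f(z)\,[T_{P,Q}f](z)\;\mathsf{d}\mu(z) = \int_{\cZ}\int_{\cZ} K_{P,Q}(z,z')f(z)f(z')\;\mathsf{d}\mu(z)\;\mathsf{d}\mu(z').
\]

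Next, write $f = \tfrac12 \tilde f$ with $\|\tilde f\|_{L_\infty}\le 1$. The quadratic form is homogeneous of degree two, so the right-hand side equals $\tfrac14$ times the same quadratic form evaluated at $\tilde f$. By the supremum characterization~\eqref{eqn:T_norm_as_sup}, that form is at most $\|T_{P,Q}\|_{L_\infty\to L_1}$. One can also bound it directly, without the equivalence: the form equals $\int \tilde f\, T_{P,Q}\tilde f\,\mathsf{d}\mu$, which is at most $\|\tilde f\|_{L_\infty}\|T_{P,Q}\tilde f\|_{L_1}\le \|T_{P,Q}\|_{L_\infty\to L_1}$. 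This yields $\beta^2_P(\alg_Q)\le \tfrac14\|T_{P,Q}\|_{L_\infty\to L_1}$.

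The only delicate point is technical: exchanging the expectation over $P$ with the double integral over $\mu\otimes\mu$. This exchange is exactly what Lemma~\ref{lem:operator_avg} encapsulates, and we assume it. The remaining step is the centering at $\tfrac12$, which is what produces the factor $\tfrac14$ rather than $1$; it relies on $\alg_Q$ preserving constants, i.e.\ on $Q(\cdot\mid z)$ integrating to one.
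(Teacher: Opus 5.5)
Your proposal is correct and is essentially the paper's proof: you center at $\tfrac12$ via $f=\alg-\tfrac12$, apply Lemma~\ref{lem:operator_avg} with $f=g$, and bound the resulting quadratic form by $\|f\|_{L_\infty}\|T_{P,Q}f\|_{L_1}\le \|f\|_{L_\infty}^2\|T_{P,Q}\|_{L_\infty\to L_1}\le\tfrac14\|T_{P,Q}\|_{L_\infty\to L_1}$. Your rescaling $f=\tfrac12\tilde f$ is just a repackaging of that last chain of inequalities.
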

In particular, note that this upper bound does not depend on the particular base algorithm $\alg$. As we will see below, this bound is tight when viewed as a result required to hold simultaneously over all $\alg$---in fact, it is possible to construct an algorithm $\alg$ and an ensembling channel $Q$ such that this upper bound is achieved for every~$P$. However, given a single algorithm $\alg$, the bound may be loose, i.e., a particular ensembled algorithm $\alg_Q$ may exhibit more stability than is guaranteed by this upper bound.

Theorem~\ref{thm:main} can also be extended to the setting where the output of our algorithm takes values in a separable Hilbert space; see Theorem~\ref{thm:main_hilbert} in Appendix~\ref{sec:HSExtension}.
\begin{proof}[Proof of Theorem~\ref{thm:main}]
Define
 $f(z) = \alg(z) - \frac{1}{2}$. 
Note that $\alg_Q(z) = f_Q(z)+\frac{1}{2} $ for all $z$, by construction. Then
\begin{align*}
    \beta^2_P(\alg_Q)
    &=\Ep{P}{\big(\alg_Q(Z_0)-\alg_Q(Z_1)\big)^2}\\
    &=\Ep{P}{\big(f_Q(Z_0)-f_Q(Z_1)\big)\cdot \big(f_Q(Z_0)-f_Q(Z_1)\big)}\\
    &= \int_{\cZ} f(z)\cdot  [T_{P,Q}f](z)\;\mathsf{d}\mu(z)\textnormal{ by Lemma~\ref{lem:operator_avg} (applied with $f=g$)}\\
    &\leq \|f\|_{L_\infty}\|T_{P,Q}f\|_{L_1}
    \leq \|f\|_{L_\infty}^2\|T_{P,Q}\|_{L_\infty\to L_1}\\
    &\leq \frac{1}{4}\|T_{P,Q}\|_{L_\infty\to L_1},
\end{align*}
where the last step holds since $f$ takes values in $[-\frac{1}{2},\frac{1}{2}]$ (because $\alg$ takes values in $[0,1]$).
\end{proof}

Next, we provide a useful relaxation of this bound, which allows us to work with a different operator norm. 
\begin{corollary}\label{cor:main_L2}
    Let $\psi$ be any density with respect to the base measure $\mu$, with $\psi(z)>0$ for all~$z$.
    Then
    for any $\alg:\cZ\to[0,1]$, it holds that
    \[\beta^2_P(\alg_Q) \leq \frac{1}{4}\|\tilde{T}^\psi_{P,Q}\|_{L_2\to L_2},\]
    where we define the operator $\tilde{T}^\psi_{P,Q}$ as
    \[[\tilde{T}^\psi_{P,Q}f](z) = \int_\cZ \frac{K_{P,Q}(z,z')}{\sqrt{\psi(z)\psi(z')}} \cdot f(z')\;\mathsf{d}\mu(z'). \]
\end{corollary}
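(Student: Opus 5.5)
The plan is to reduce Corollary~\ref{cor:main_L2} to Theorem~\ref{thm:main}, or more directly to the chain of inequalities in its proof. Everything rests on a change of variables: $g = f\sqrt{\psi}$. As in that proof, set $f(z)=\alg(z)-\frac12$, so that $\|f\|_{L_\infty}\le\frac12$. Lemma~\ref{lem:operator_avg}, applied with $f=g$, then gives
\[\beta^2_P(\alg_Q)=\int_\cZ\int_\cZ K_{P,Q}(z,z')f(z)f(z')\;\mathsf{d}\mu(z)\;\mathsf{d}\mu(z').\]
Now define $g(z)=f(z)\sqrt{\psi(z)}$. Because $\psi>0$, we can write $f(z)=g(z)/\sqrt{\psi(z)}$, and the double integral becomes
\[\int_\cZ\int_\cZ \frac{K_{P,Q}(z,z')}{\sqrt{\psi(z)\psi(z')}}\,g(z)g(z')\;\mathsf{d}\mu(z)\;\mathsf{d}\mu(z') = \langle g,\tilde{T}^\psi_{P,Q}g\rangle_{L_2(\mu)}.\]

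Next, apply Cauchy--Schwarz and the definition of the operator norm. This bounds the inner product by $\|\tilde T^\psi_{P,Q}\|_{L_2\to L_2}\|g\|_{L_2}^2$. For the norm of $g$ we have
\[\|g\|_{L_2}^2=\int f^2\psi\,\mathsf{d}\mu\le \|f\|_{L_\infty}^2\int\psi\,\mathsf{d}\mu=\|f\|_{L_\infty}^2\le\tfrac14,\]
where we used that $\psi$ is a density with respect to $\mu$. Combining the two bounds gives the claim.

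An equivalent route is to show $\|T_{P,Q}\|_{L_\infty\to L_1}\le\|\tilde T^\psi_{P,Q}\|_{L_2\to L_2}$ via the representation~\eqref{eqn:T_norm_as_sup}. The same substitution gives this for every $f$ with $\|f\|_{L_\infty}\le1$, and the corollary then follows from Theorem~\ref{thm:main}.

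The only delicate point is technical: making sure the rewritten double integral is well defined and equals the $L_2$ inner product. Two things are needed. First, $g$ must lie in $L_2(\mu)$, which holds by the bound on $\|g\|_{L_2}^2$ above. Second, the manipulation requires absolute integrability of $K_{P,Q}(z,z')f(z)f(z')$. This follows because $|K_{P,Q}(z,z')|$ is bounded by $\mathbb{E}_P$ of a product of sums of densities, each of which integrates to at most $2$ in each variable, so Fubini applies. If $\tilde T^\psi_{P,Q}$ is not bounded on $L_2$, its norm is $+\infty$ and the inequality holds trivially.
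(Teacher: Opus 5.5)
Your proposal is correct and uses the same key step as the paper: the substitution $\tilde f=\sqrt{\psi}\,f$ together with $\int\psi f^2\,\mathsf{d}\mu\le\|f\|_{L_\infty}^2$. The paper takes what you call the equivalent route, proving $\|T_{P,Q}\|_{L_\infty\to L_1}\le\|\tilde T^\psi_{P,Q}\|_{L_2\to L_2}$ via~\eqref{eqn:T_norm_as_sup} and then invoking Theorem~\ref{thm:main}, whereas your main line inlines this with $f=\alg-\frac12$; your integrability remarks (e.g.\ $\int\int|K_{P,Q}|\,\mathsf{d}\mu\,\mathsf{d}\mu\le 4$) fill in details the paper leaves as a ``straightforward calculation''.
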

\begin{proof}[Proof of Corollary~\ref{cor:main_L2}]
    Fix any $f:\cZ\to\R$ with $\|f\|_{L_\infty}\leq 1$.
    A straightforward calculation shows that
    \[\int_{\cZ} \int_{\cZ} K_{P,Q}(z,z') f(z)f(z')\;\mathsf{d}\mu(z)\;\mathsf{d}\mu(z') = \int_\cZ \tilde{f}(z)\cdot [\tilde{T}^\psi_{P,Q}\tilde{f}](z)\;\mathsf{d}\mu(z),\]
    where $\tilde{f}(z) = \sqrt{\psi(z)}\cdot f(z)$. Note that $\int_{\cZ} \tilde{f}(z)^2\;\mathsf{d}\mu(z) = \int_{\cZ} \psi(z) f(z)^2\;\mathsf{d}\mu(z)\leq 1$, since $\psi$ is a density and $\|f\|_{L_\infty}\leq 1$. 
Therefore,
    \[\int_{\cZ} \int_{\cZ} K_{P,Q}(z,z') f(z)f(z')\;\mathsf{d}\mu(z)\;\mathsf{d}\mu(z') \leq \|\tilde{T}^\psi_{P,Q}\|_{L_2\to L_2}.\]
    Since this holds for any $f$ with $\|f\|_{L_\infty}\leq 1$, applying~\eqref{eqn:T_norm_as_sup} we have
    \[\|T_{P,Q}\|_{L_\infty\to L_1}\leq \|\tilde{T}^\psi_{P,Q}\|_{L_2\to L_2}.\]
    Combined with Theorem~\ref{thm:main}, this completes the proof.
\end{proof}

\subsection{Universality}
Next, we show that ensembling is a \emph{universal} tool for obtaining stable algorithms. In particular, we will show that for any algorithm $\alg$, we can formulate $\alg$ as the ensembled version of some base algorithm $\alg^*$ such that our theory above exactly characterizes the stability of~$\alg$: that is, we construct an algorithm $\alg^*$ such that
\[\alg = [\alg^*]_Q,\]
and such that the stability $\beta^2_P(\alg)$ of the algorithm $\alg$ can be fully described by the result of Theorem~\ref{thm:main}. (We note however that the relaxation provided in Corollary~\ref{cor:main_L2} may not be tight.)

\begin{theorem}\label{thm:universality}
    Let $\alg:\cZ\to[0,1]$ be any algorithm. Then there exists another algorithm $\alg^*:\cZ\to[0,1]$ and an ensembling channel $Q(\cdot\mid z)$ such that
    \[\alg(z) = [\alg^*]_Q(z) \textnormal{ for all $z\in\cZ$}\]
    and such that
    \[\beta^2_P(\alg) = \frac{1}{4}\|T_{P,Q}\|_{L_\infty\to L_1}\textnormal{ for all distributions $P$ on $\cZ\times\cZ$}.\]
\end{theorem}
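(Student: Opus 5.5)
The natural construction is to make the channel randomize the output itself. I will enlarge or reinterpret the data space so that $Q(\cdot\mid z)$ places mass only on two "points" encoding the values $0$ and $1$, with probabilities $1-\alg(z)$ and $\alg(z)$. Concretely, fix two distinct points $z^{(0)},z^{(1)}\in\cZ$ (assuming $|\cZ|\ge 2$), and take $\mu$ to include counting measure on these two points. Define
\[Q(\cdot\mid z) = (1-\alg(z))\,\delta_{z^{(0)}} + \alg(z)\,\delta_{z^{(1)}},\]
and let $\alg^*(z^{(1)})=1$, $\alg^*(z^{(0)})=0$, with $\alg^*$ arbitrary in $[0,1]$ elsewhere. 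Then $[\alg^*]_Q(z) = \alg(z)$ for all $z$, since $Q(\cdot\mid z)$ is supported on $\{z^{(0)},z^{(1)}\}$. If $\cZ$ is a single point, the claim is trivial, since both sides vanish.

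Next I compute $K_{P,Q}$. Its support is contained in $\{z^{(0)},z^{(1)}\}^2$. Write $D=\alg(Z_0)-\alg(Z_1)$. Then $Q(z^{(1)}\mid Z_0)-Q(z^{(1)}\mid Z_1) = D$ and $Q(z^{(0)}\mid Z_0)-Q(z^{(0)}\mid Z_1)=-D$. Hence $K_{P,Q}(z^{(a)},z^{(b)}) = (-1)^{a+b}\,\Ep{P}{D^2} = (-1)^{a+b}\beta^2_P(\alg)$.

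Using the representation~\eqref{eqn:T_norm_as_sup}, for $\|f\|_{L_\infty}\le 1$ the double integral equals
\[\beta^2_P(\alg)\,\big(f(z^{(1)})-f(z^{(0)})\big)^2.\]
This is maximized at $f(z^{(1)})=1$, $f(z^{(0)})=-1$, giving $\|T_{P,Q}\|_{L_\infty\to L_1}=4\beta^2_P(\alg)$. That is exactly the claimed identity, and it holds for every $P$, because the construction of $Q$ and $\alg^*$ does not depend on $P$. Alternatively, the inequality $\le$ follows from Theorem~\ref{thm:main} applied to $\alg^*$, so only the lower bound needs the explicit test function.

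The main obstacle is technical rather than conceptual. The paper assumes $Q(\cdot\mid z)$ has a density with respect to a single $\sigma$-finite $\mu$, and $\mu$ must be such that the double integral is computed against point masses at $z^{(0)},z^{(1)}$. I would take $\mu=\delta_{z^{(0)}}+\delta_{z^{(1)}}$, which is $\sigma$-finite; the densities are then just the point probabilities, and the $L_\infty$ and $L_1$ norms only see these two points. If one insists on keeping a given base measure, such as Lebesgue measure, I would instead pick two disjoint sets $B_0,B_1$ of positive finite measure, use uniform densities on them, and set $\alg^*=\one{B_1}$. The same calculation then goes through with $f=\one{B_1}-\one{B_0}$ attaining the supremum.
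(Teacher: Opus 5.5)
Your proof is correct and uses the paper's construction: the same two-point channel $Q(\cdot\mid z)=(1-\alg(z))\delta_{z^{(0)}}+\alg(z)\delta_{z^{(1)}}$ and the same kernel with entries $\pm\beta^2_P(\alg)$. The only difference is that you evaluate the quadratic form in~\eqref{eqn:T_norm_as_sup} exactly, which gives both directions at once; the paper instead takes $\beta^2_P(\alg)\le\frac14\|T_{P,Q}\|_{L_\infty\to L_1}$ from Theorem~\ref{thm:main} and proves the reverse via $\|T_{P,Q}f\|_{L_1}\le 4\beta^2_P(\alg)$. Your choice $\mu=\delta_{z^{(0)}}+\delta_{z^{(1)}}$ is also more careful about $\sigma$-finiteness than the paper's counting measure.

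One slip is in your ``alternatively'' aside. Theorem~\ref{thm:main} already gives $\|T_{P,Q}\|_{L_\infty\to L_1}\ge 4\beta^2_P(\alg)$, which is the same direction a test function would give. The direction that actually needs work is the upper bound over all $f$ with $\|f\|_{L_\infty}\le 1$, and your exact computation (or the paper's $L_1$ bound) supplies it.
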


\section{Stability guarantees for examples}
\label{Sec:Examples}

Next we examine the implications of our main result, Theorem~\ref{thm:main}, in the settings of our examples. We will see how bagging (Example~\ref{exampleA}) can ensure stability with respect to deleting data (Example~\ref{example1}), and how smoothing by adding noise (Example~\ref{exampleB}) can ensure stability with respect to the corrupted data model (Example~\ref{example2}). We then also develop one additional example: stability with respect to missing data.

\subsection{Bagging \& deleted data (Example~\ref{example1}+\ref{exampleA})}\label{sec:example1+A}

We first need some notation: for any sequence of indices $r=(i_1,\dots,i_m)\in[n]^m$, we define the corresponding subvector of $z$ as
    \[z_r = (z_{i_1},\dots,z_{i_m}),\]
    and let $p_r$ denote the probability of selecting this `bag'. With this notation, the bagged algorithm is given by
\begin{equation}\label{eqn:define_A_bag}\alg_{\textnormal{bag}}(z) = \sum_r p_r \alg(z_r),\end{equation}
for a data vector $z=(z_1,\dots,z_n)$ of length $n$.  We can make this more concrete by considering the two most common examples:
\begin{itemize}
    \item For subbagging (sampling $m$ indices uniformly at random without replacement), we have $p_r = \frac{1}{n!/(n-m)!}$ for all $r\in[n]^m$ consisting of $m$ distinct indices, and $p_r = 0$ otherwise.
    \item For $m$ out of $n$ bootstrapping (sampling $m$ indices uniformly at random with replacement), we have $p_r = \frac{1}{n^m}$ for all $r\in[n]^m$.
\end{itemize}
We will assume that the bagged algorithm on the data vector $z_{-i}$ (which has length $n-1$) obeys
\begin{equation}\label{eqn:define_A_bag_i}\alg_{\textnormal{bag}}(z_{-i}) = \frac{\sum_r \one{i\not\in r}p_r \alg(z_r)}{\sum_r \one{i\not\in r}p_r},\end{equation}
that is, sampling a random bag from $[n]\setminus\{i\}$ is equivalent to sampling a bag $r$ with probabilities $p_r$, conditional on the event $i\not\in r$. This holds for both subbagging and bootstrapping, for any choice of $m$ (e.g., sampling $m$ entries from $z_{-i}$, without replacement, is equivalent to sampling $m$ entries without replacement from $z$ and conditioning on the event that entry $i$ was not sampled). 

The work of \citet{soloff2024bagging,soloff2024stability} shows a stability guarantee for $\alg_{\textnormal{bag}}$ for certain bagging schemes (we give details below).  We will now see that our general framework can generalize this result, while recovering the bound~\eqref{eqn:soloff_bound} as a special case.
To state our result, define $\pi_i = \sum_r \one{i\in r}p_r$ (the probability that a randomly sampled bag contains $i$), and $\Sigma_{ij} = \sum_r \one{i,j\in r} p_r - \pi_i\pi_j$ (the covariance between the events $i\in r$ and $j\in r$). 
\begin{proposition}\label{prop:example1}
    Let $\alg$ be any base algorithm returning outputs in $[0,1]$. The bagged algorithm $\alg_{\textnormal{bag}}$~\eqref{eqn:define_A_bag} satisfies
    \[\sup_{z=(z_1,\dots,z_n)}\frac{1}{n}\sum_{i=1}^n \big(\alg_{\textnormal{bag}}(z) - \alg_{\textnormal{bag}}(z_{-i})\big)^2 \leq \frac{1}{4}\|M_{\textnormal{bag}}\|_2,\]
    where $\|\cdot\|_2$ denotes the usual matrix operator norm, and where $M_{\textnormal{bag}}\in\R^{n\times n}$ is the matrix defined by entries
    \[(M_{\textnormal{bag}})_{ij} = \frac{1}{n} \frac{\Sigma_{ij}}{(1-\pi_i)(1-\pi_j)}.\]
\end{proposition}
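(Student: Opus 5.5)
I would prove the bound for a fixed data vector $z=(z_1,\dots,z_n)$ by a direct computation in the ``bag space''. This is effectively Corollary~\ref{cor:main_L2} with $\cZ$ replaced by the index sequences $r\in[n]^m$ and $\psi=p$. I will write the argument directly, since it is short. Following the proof of Theorem~\ref{thm:main}, set $f(r)=\alg(z_r)-\tfrac12\in[-\tfrac12,\tfrac12]$, and let $\mathbb{E}$ denote expectation over a random bag $r$ drawn with probabilities $p_r$.

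\textbf{Step 1: express each deletion difference as a covariance.} By~\eqref{eqn:define_A_bag} and~\eqref{eqn:define_A_bag_i}, and using $\sum_r \one{i\not\in r}p_r = 1-\pi_i$, we get
\[\alg_{\textnormal{bag}}(z)-\alg_{\textnormal{bag}}(z_{-i}) = \sum_r p_r\,\alg(z_r)\Big(1-\frac{\one{i\not\in r}}{1-\pi_i}\Big)=\sum_r p_r\,\alg(z_r)\,\frac{\one{i\in r}-\pi_i}{1-\pi_i}.\]
The weights $p_r(\one{i\in r}-\pi_i)$ sum to zero, so $\alg(z_r)$ may be replaced by $f(r)$. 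Hence the $i$th difference equals $\mathbb{E}[f(r)c_i(r)]$, where $c_i(r)=(\one{i\in r}-\pi_i)/(1-\pi_i)$. (If $\pi_i=1$ then $z_{-i}$'s bagged version is undefined, so I assume $\pi_i<1$ throughout.)

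\textbf{Step 2: dualize the sum of squares.} Let $v_i=\mathbb{E}[f(r)c_i(r)]/\sqrt n$, so the left-hand side of the proposition (for this $z$) is $\|v\|_2^2$. Then
\[\|v\|_2^2=\sup_{\|u\|_2\le1}\Big(\sum_i u_iv_i\Big)^2=\sup_{\|u\|_2\le 1}\big(\mathbb{E}[f(r)g_u(r)]\big)^2,\qquad g_u(r)=\frac{1}{\sqrt n}\sum_i u_i c_i(r).\]

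\textbf{Step 3: Cauchy--Schwarz and the covariance identity.} Since $|f|\le\tfrac12$, we have $(\mathbb{E}[fg_u])^2\le \mathbb{E}[f^2]\,\mathbb{E}[g_u^2]\le\tfrac14\mathbb{E}[g_u^2]$. Moreover, $\mathbb{E}[c_i(r)c_j(r)]=\Sigma_{ij}/((1-\pi_i)(1-\pi_j))$, because $\mathbb{E}[(\one{i\in r}-\pi_i)(\one{j\in r}-\pi_j)]=\Sigma_{ij}$. Therefore $\mathbb{E}[g_u^2]=u^\top M_{\textnormal{bag}}u\le\|M_{\textnormal{bag}}\|_2$. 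The bound holds for every $z$, so it also holds for the supremum over $z$.

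The only delicate point is Step 1: it requires the conditioning identity~\eqref{eqn:define_A_bag_i} and the centering trick to remove the constant $\tfrac12$. Everything afterwards is the same $L_\infty$-to-$L_2$ relaxation used in Corollary~\ref{cor:main_L2}.
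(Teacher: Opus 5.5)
Your proof is correct and is essentially the paper's argument written out directly. The paper passes through Lemma~\ref{lem:operator_avg} and Theorem~\ref{thm:main} to reduce to the array with entries $M_{i,r}=\frac{1}{\sqrt n}\,p_r\bigl(1-\one{i\not\in r}/(1-\pi_i)\bigr)$, then relaxes $\|M^\top M\|_{\infty\to 1}$ to $\|\tilde M\tilde M^\top\|_2=\|M_{\textnormal{bag}}\|_2$ via the $\sqrt{p_r}$ reweighting, which is exactly your centering, duality and Cauchy--Schwarz in $L_2(p)$; working with functions of the bag index $r$ rather than of $z_r$ is, if anything, slightly cleaner since it avoids the density-on-$\cZ$ bookkeeping.
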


We now see how \citet{soloff2024bagging,soloff2024stability}'s results can be recovered as a special case. Their work assumes a symmetry condition ensuring that the $n$ indices are treated interchangeably in the process of sampling a bag $r$, and moreover assumes correlation cannot be positive: 
\begin{equation}\label{eqn:soloff_assumptions}\pi_i = p \textnormal{ for all $i$}, \quad \Sigma_{ij} = -q \leq 0\textnormal{ for all $i\neq j$}\end{equation}
for some $p,q$ (this holds for both subbagging and bootstrapping). 
Under this condition, we can then calculate 
\[(M_{\textnormal{bag}})_{ij} = \begin{cases}\frac{1}{n} \cdot \frac{p}{1-p}, & i=j, \\ \frac{1}{n}\cdot \frac{-q}{(1-p)^2}, & i\neq j,\end{cases}\]
which 
implies
\[\|M_{\textnormal{bag}}\|_2 =  \frac{1}{n} \cdot \left( \frac{p}{1-p} + \frac{q}{(1-p)^2} \right) \leq \frac{1}{n-1}\cdot \frac{p}{1-p},\]
where the inequality holds since we must have $q\leq \frac{p(1-p)}{n-1}$ (because $\Sigma$ is positive semidefinite). Combined with Proposition~\ref{prop:example1}, this implies the stability bound
\begin{equation}\label{eqn:soloff_bound}\sup_{z=(z_1,\dots,z_n)}\frac{1}{n}\sum_{i=1}^n \big(\alg_{\textnormal{bag}}(z) - \alg_{\textnormal{bag}}(z_{-i})\big)^2 \leq  \frac{1}{4(n-1)} \cdot \frac{p}{1-p},\end{equation}
which exactly recovers the main result of \citet{soloff2024bagging,soloff2024stability}.

\subsection{Adding noise \& corrupted data (Example~\ref{example2}+\ref{exampleB})}
Next we derive stability bounds for the corrupted data setting.
For $x\in\R^s$, define
\[\Delta_h(x) = \textnormal{d}_{\chi^2}(x+W\| W)\textnormal{ where $W=(W_1,\dots,W_s)$ for $W_1,\dots,W_s\iidsim h$},\]
where we recall that $h$ is the density of the entries of the ensembling noise, and where $\textnormal{d}_{\chi^2}$ denotes the $\chi^2$-divergence. Essentially, $\Delta_h(x)$ is small if noise drawn from $h$ can mask a shift of magnitude $|x_i|$ across draws $i=1,\dots,s$.
\begin{proposition}\label{prop:example2}
In the setting and notation above, the smoothed algorithm $\alg_{\textnormal{smth}}$ satisfies 
    \[\frac{1}{{\binom{n}{s}}}\sum_{S\in\binom{[n]}{s}}\Ep{Z\sim \pi, \zeta\sim \nu}{\big(\alg_{\textnormal{smth}}(Z) - \alg_{\textnormal{smth}}(Z\circ\mathbf{1}_{S^c} + \zeta\circ\mathbf{1}_S)\big)^2}
    \leq \frac{s}{4n}\EE{\max_{S\in\binom{[n]}{s}}\Delta_h(\zeta_S-Z_S)}.\]
\end{proposition}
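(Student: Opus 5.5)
The plan is a direct argument conditional on $(Z,\zeta)$, which realizes the bound $\frac{1}{4}\|T_{P,Q}\|_{L_\infty\to L_1}$ of Theorem~\ref{thm:main} concretely. The key tool is an orthogonal (Hoeffding/Efron--Stein) decomposition in $L_2$ of the product measure $Q(\cdot\mid Z)$. The factor $s/n$ will come from averaging over $S$, and the factor $\Delta_h$ from a Cauchy--Schwarz step.

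\textbf{Setup.} Set $f=\alg-\frac12$, so $|f|\le\frac12$, and fix $Z=z$ and $\zeta$. Write $\rho(w)=Q(w\mid z)=\prod_i h(w_i-z_i)$, and let $z'_S=z\circ\mathbf{1}_{S^c}+\zeta\circ\mathbf{1}_S$. Then
\[
\alg_{\textnormal{smth}}(z)-\alg_{\textnormal{smth}}(z'_S)=\Ep{W\sim\rho}{f(W)\big(1-L_S(W)\big)},
\qquad L_S(w)=\frac{Q(w\mid z'_S)}{\rho(w)}=\prod_{i\in S}\ell_i(w_i),
\]
where $\ell_i(w_i)=h(w_i-\zeta_i)/h(w_i-z_i)$. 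I will either assume the support conditions that make $\Delta_h$ finite, or note that the bound is trivial otherwise. Each $\ell_i$ has mean $1$ under the $i$th marginal $\rho_i$ of $\rho$.

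\textbf{Expanding the likelihood ratio.} Expand
\[
L_S-1=\sum_{\emptyset\ne T\subseteq S}u_T,\qquad u_T=\prod_{i\in T}(\ell_i-1).
\]
The $u_T$ are mutually orthogonal in $L_2(\rho)$, and $u_T$ lies in the $T$-th Hoeffding component (it is a function of $w_T$ only, with conditional mean zero in each coordinate of $T$). Their norms satisfy
\[
\sum_{\emptyset\ne T\subseteq S}\|u_T\|^2=\prod_{i\in S}\big(1+\textnormal{d}_{\chi^2,i}\big)-1=\Delta_h(\zeta_S-z_S),
\]
where $\textnormal{d}_{\chi^2,i}$ is the $\chi^2$-divergence of the $i$th shifted coordinate; the identity holds because $\chi^2$-divergence tensorizes over products in this way.

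\textbf{Cauchy--Schwarz on each $S$.} Decompose $f=\sum_{T\subseteq[n]}f_T$ in its Hoeffding decomposition under $\rho$. By orthogonality, $\Ep{\rho}{f(L_S-1)}=\sum_{\emptyset\ne T\subseteq S}\langle f_T,u_T\rangle$. Cauchy--Schwarz then gives
\[
\big(\alg_{\textnormal{smth}}(z)-\alg_{\textnormal{smth}}(z'_S)\big)^2\le\Big(\sum_{\emptyset\ne T\subseteq S}\|f_T\|^2\Big)\,\Delta_h(\zeta_S-z_S)\le\Big(\sum_{\emptyset\ne T\subseteq S}\|f_T\|^2\Big)\max_{|S'|=s}\Delta_h(\zeta_{S'}-z_{S'}).
\]

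\textbf{Averaging over $S$.} Average over $S\sim\textnormal{Unif}\binom{[n]}{s}$. Each nonempty $T$ is counted with weight $\PP{T\subseteq S}\le\PP{i\in S}=s/n$ for any $i\in T$. Hence
\[
\frac{1}{\binom{n}{s}}\sum_S\sum_{\emptyset\ne T\subseteq S}\|f_T\|^2\le\frac{s}{n}\sum_{T\ne\emptyset}\|f_T\|^2=\frac{s}{n}\textnormal{Var}_\rho(f)\le\frac{s}{4n},
\]
using $|f|\le\frac12$. Finally, take the expectation over $Z\sim\pi$ and $\zeta\sim\nu$, which yields the claimed bound.

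\textbf{Main obstacle.} The main obstacle is the orthogonality bookkeeping. One must check that $u_T$ is exactly the $T$-Hoeffding component of $L_S$, so that it pairs only with $f_T$; this follows from $\Ep{\rho_i}{\ell_i-1}=0$ and the product structure of $\rho$. One must also verify the $\chi^2$ tensorization identity. Beyond that, the only care needed is measurability and integrability: $\ell_i\in L_2(\rho_i)$ whenever $\Delta_h<\infty$, and otherwise the right-hand side is infinite and the bound holds trivially.
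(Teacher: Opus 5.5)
Your argument is correct, and its skeleton matches the paper's. Both condition on $(Z,\zeta)$, write the difference as $\Ep{\rho}{f(W)(L_S(W)-1)}$, and apply Cauchy--Schwarz to get $\textnormal{Var}_\rho(f_S)\cdot\Delta_h(\zeta_S-Z_S)$ with $f_S=\EEst{f(W)}{W_S}$. Both then pull out $\max_S\Delta_h$ and average over $S$. The paper phrases this as a bound on $\|T_{P,Q}\|_{L_\infty\to L_1}$ fed into Theorem~\ref{thm:main}, which is equivalent to your direct substitution $f=\alg-\frac12$.

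Your $\sum_{\emptyset\neq T\subseteq S}\|f_T\|^2$ is exactly $\textnormal{Var}_\rho(f_S)$. Moreover, $L_S-1$ is a mean-zero function of $W_S$ with $\Ep{\rho}{(L_S-1)^2}=\Delta_h(\zeta_S-Z_S)$ directly by the definition of $\chi^2$-divergence. So you do not need to expand $L_S-1$ into the $u_T$ or check tensorization; the ``orthogonality bookkeeping'' you flag as the main obstacle can simply be skipped.

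The genuine difference is the averaging step. The paper isolates it as Lemma~\ref{lem:sum_var_subset} and proves it by induction on $n$, using the law of total variance and the Efron--Stein inequality. You instead read it off the Hoeffding decomposition: $\binom{n}{s}^{-1}\sum_S\textnormal{Var}_\rho(f_S)=\sum_{T\neq\emptyset}\PP{T\subseteq S}\,\|f_T\|^2$, with $\PP{T\subseteq S}=\binom{n-|T|}{s-|T|}/\binom{n}{s}\le s/n$. Your route is shorter and gives an exact identity. It also shows that, for $s<n$, interactions of order $|T|\ge 2$ are strictly down-weighted relative to $s/n$, so the factor is attained only by additive $f$. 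The paper's route avoids setting up the orthogonal decomposition and uses only off-the-shelf inequalities.

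Two small points in your favor. You make explicit that $\Delta_h<\infty$ gives the absolute continuity needed for $\Ep{\rho_i}{\ell_i}=1$, which the paper leaves implicit. Your computation also produces $\Delta_h(\zeta_S-Z_S)$ as in the statement, whereas the paper's proof writes $\Delta_h(Z_S-\zeta_S)$, which agrees only for symmetric $h$.
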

For example, suppose that $h$ is the density of the $\mathcal{N}(0,\sigma^2)$ distribution. Since
\[\textnormal{d}_{\chi^2}\big(\mathcal{N}(\mu,\sigma^2\mathbf{I}_s)\| \mathcal{N}(\mathbf{0}_s,\sigma^2\mathbf{I}_s)\big) = e^{\|\mu\|^2_2/\sigma^2}-1\]
for any $\mu\in\R^s$,
we therefore have 
\[\Delta_h(x) = e^{\|x\|^2_2/\sigma^2}-1,\]
and so the stability bound above can be simplified to
\[
\frac{s}{4n}\EE{ \exp\left\{\max_{S\in\binom{[n]}{s}} \|Z_S-\zeta_S\|^2_2/\sigma^2\right\}-1}.
\]
For instance, if $\|Z\|_\infty,\|\zeta\|_\infty\leq B$ almost surely (i.e., $\pi$ and $\nu$ are supported on $[-B,B]^n$), then
the stability result can be bounded by
\begin{equation}\label{eqn:example2_gaussian_bdd}
\frac{s\big(e^{4B^2s/\sigma^2}-1\big)}{4n}.
\end{equation}

As another example, we may consider adding more heavy-tailed noise, which will more effectively mask corruptions (and will allow us to avoid assuming boundedness). Let $h$ be the standard Cauchy distribution. By calculating the $\chi^2$ divergence between Cauchy distributions \citep{nielsen2022f}, we have
\[\Delta_h(x) = \frac{x^2}{2},\]
and
so the stability bound above can be simplified to
\[
\frac{s}{8n}\EE{\max_{S\in\binom{[n]}{s}} \|Z_S-\zeta_S\|^2_2}.
\]
For instance, if each $Z_i,\zeta_i$ is subgaussian, we then have a bound that is
\[\mathcal{O}\left(\frac{s^2\log n}{n}\right).\]

\subsection{Stability with respect to missing data}
We next develop an additional example, in the context of missing data. Let $Z$ denote the complete data (with no missingness), while $Z_\Omega$ denotes data with missingness, where $\Omega$ is the `mask' specifying which parts of the data are observed. Specifically, we assume $Z$ takes values in a space $\cZ= \R^{\cI}$, for instance:
\begin{itemize}
    \item For matrix-valued data $Z\in\R^{n\times m}$, we can equivalently write $Z\in\R^{[n]\times [m]}$, by representing a $n\times m$ matrix as a function mapping an index $(i,j)\in\cI=[n]\times[m]$ to a value $Z_{ij}\in \R$.
    \item $Z$ may consist of functional data: we may have $Z\in\R^{[0,1]}$ in the setting where we observe real-valued data over the time period $t\in[0,1]$ or $Z\in \R^{[0,1]\times[0,1]}$ if we observe real-valued data over a $2$-dimensional spatial domain.
\end{itemize}
The observation mask $\Omega$ is then a subset of $\cI$, specifying which part of $Z$ is observed (with $\Omega=\cI$ corresponding to the case of no missingness, i.e., $Z_{\cI}=Z$). Our goal is to ensure that $\alg(Z_\Omega)$, the output of the algorithm $\alg$ when the input $Z_\Omega$ has some missingness, is typically similar to $\alg(Z)$, the output when $\alg$ is trained on the fully observed data $Z$.

Now we formalize the general setting. Let $\pi$ be a joint distribution for the pair $(Z,\Omega)$. For convenience we assume that the possible values of $\Omega$, under this joint distribution $\pi$, is at most countably infinite, with $\Omega\in\{\omega_1,\omega_2,\dots\}$ (where $\omega_i\subseteq\cI$ for each $i\geq 1$). We will write~$\pi_Z$ to denote the marginal distribution of $Z$, and $\pi_i(z) = \Ppst{\pi}{\Omega=\omega_i}{Z=z}$ to denote the conditional distribution of $\Omega$, which in general depends on $Z$ (note that the common \emph{missing completely at random} assumption would instead require $\Omega\independent Z$).

Our goal is to be stable with respect to missingness, i.e., to bound
\[\Ep{(Z,\Omega)\sim\pi}{(\alg(Z)-\alg(Z_\Omega))^2}.\]
We now define an ensembling procedure that will allow for this type of stability guarantee. Let $q=(q_1,q_2,\dots)$ be a distribution on $\{\omega_1,\omega_2,\dots\}$. We ensemble via a \emph{Poissonized missingness} mechanism, by averaging over the following process: first sample $M\sim\textnormal{Poisson}(\lambda)$, then sample $\Omega_1,\dots,\Omega_M\iidsim q$ and use the observation mask $\cap_{j=1}^M\Omega_j$ (we take the convention that the empty intersection is the entire set $\cI$---that is, if $M=0$, then the observation mask is $\cI$, i.e., no missingness).
More formally, we can write
\begin{equation}\label{eqn:Poisson_missingness}\alg_{\textnormal{PM}(\lambda)}(z) = \Ep{M\sim\textnormal{Poisson}(\lambda),\Omega_j\iidsim q}{\alg(Z_{\cap_{j=1}^M \Omega_j})}.\end{equation}

This ensembling kernel leads to the following stability guarantee:
\begin{proposition}\label{prop:example3}
In the setting and notation above, then the algorithm $\alg_{\textnormal{PM}(\lambda)}$ constructed via Poissonized missingness satisfies 
    \[\Ep{(Z,\Omega)\sim \pi}{(\alg_{\textnormal{PM}(\lambda)}(z) - \alg_{\textnormal{PM}(\lambda)}(z_\Omega))^2}
    \leq \frac{\Ep{Z\sim \pi_Z}{\sup_i \{\pi_i(Z)/q_i\}}}{\lambda}.\]
\end{proposition}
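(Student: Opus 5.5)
The plan is to work conditionally on $Z=z$ and exploit the Poisson thinning structure of the Poissonized missingness mechanism. This makes the Bessel-type (orthogonality) argument underlying Theorem~\ref{thm:main} explicit, so the framework of Section~\ref{sec:main_results} is not invoked directly.

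\textbf{Step 1 (thinning).} Drawing $M\sim\textnormal{Poisson}(\lambda)$ and then $\Omega_1,\dots,\Omega_M\iidsim q$ is equivalent to drawing independent counts $N_k\sim\textnormal{Poisson}(\lambda q_k)$, $k\geq 1$, where $N_k$ counts how often $\omega_k$ was drawn. The mask is then $\cap_{k:N_k\geq1}\omega_k$.

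Write $E_k=\{N_k\geq 1\}$. The events $E_k$ are mutually independent, with $p_k:=\PP{E_k}=1-e^{-\lambda q_k}$. Let $A=\alg(z_{\cap_{k:N_k\ge1}\omega_k})$, a $[0,1]$-valued function of the independent variables $(N_k)_k$. Then $\alg_{\textnormal{PM}(\lambda)}(z)=\EE{A}$.

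\textbf{Step 2 (identify the perturbed output).} I use the natural consistency of masking, $(z_{\omega})_{\omega'}=z_{\omega\cap\omega'}$. Under it, $\alg_{\textnormal{PM}(\lambda)}(z_{\omega_k})$ is the average of $\alg$ over the mask $\omega_k\cap(\cap_{j:N_j\ge1}\omega_j)$.

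This coincides in distribution with the mask conditioned on $E_k$. Given $N_k\geq1$, the set $\omega_k$ is included in the intersection, and the other counts are unaffected by the conditioning because of independence. Hence
\[\alg_{\textnormal{PM}(\lambda)}(z_{\omega_k})=\EEst{A}{E_k},\]
and therefore
\[\alg_{\textnormal{PM}(\lambda)}(z)-\alg_{\textnormal{PM}(\lambda)}(z_{\omega_k})=\EE{A}-\EEst{A}{E_k}=-\frac{\mathrm{Cov}(A,\one{E_k})}{p_k}.\]

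\textbf{Step 3 (Bessel inequality).} The indicators $\one{E_k}$ are independent, so the standardized variables $(\one{E_k}-p_k)/\sqrt{p_k(1-p_k)}$ form an orthonormal system in $L_2$. Bessel's inequality then gives
\[\sum_k \frac{\mathrm{Cov}(A,\one{E_k})^2}{p_k(1-p_k)}\leq \mathrm{Var}(A)\leq \tfrac14.\]
Consequently
\[\sum_k \pi_k(z)\,\frac{\mathrm{Cov}(A,\one{E_k})^2}{p_k^2}\leq \sup_k\Big\{\pi_k(z)\frac{1-p_k}{p_k}\Big\}\cdot\frac14 .\]
Since $(1-p_k)/p_k=1/(e^{\lambda q_k}-1)\leq 1/(\lambda q_k)$, the left-hand side is at most $\frac{1}{4\lambda}\sup_k \pi_k(z)/q_k$.

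Integrating over $Z\sim\pi_Z$ bounds the target quantity by $\frac{1}{4\lambda}\EE{\sup_i\pi_i(Z)/q_i}$, which is even stronger than the claimed bound. Indices with $q_k=0$ are harmless: if $\pi_k(z)>0$ there, the supremum is infinite and the bound is trivial; if $\pi_k(z)=0$, the term does not contribute.

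\textbf{Main obstacle.} I expect the delicate step to be Step 2. It requires making precise that ensembling applied to already-masked data $z_{\omega_k}$ amounts to further intersecting masks, and that this equals conditioning the Poissonized mechanism on $E_k$. Both facts rely on the independence of the thinned counts, so I would verify them carefully, including the convention that $M=0$ gives the full mask $\cI$. The remaining steps are routine.
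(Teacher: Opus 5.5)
Your proof is correct. It takes a more direct route than the paper, although the underlying computation is the same.

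\textbf{Shared ingredients.} Both arguments rest on Poisson thinning into independent indicators $a_k=\One{N_k\geq 1}$. Both identify $Q(\cdot\mid Z_{\omega_k})$ with the ensembling law conditioned on $a_k=1$; the paper does your Step~2 in exactly this way.

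\textbf{The paper's route.} It stays inside its general framework. It invokes Theorem~\ref{thm:main} and represents $T_{P,Q}$ through the matrix $M(z)$ via the identity~\eqref{eqn:convert_example3_to_matrix}, which it asserts by analogy with Proposition~\ref{prop:example1}. It then relaxes $\|M(z)^\top M(z)\|_{\infty\to 1}$ to $\|\tilde{M}(z)\tilde{M}(z)^\top\|_2$ and checks that this Gram matrix is diagonal with entries $\pi_i(z)/(e^{\lambda q_i}-1)$.

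\textbf{Your Bessel step is the same diagonality.} After dividing by the square root of the mass function of $\bfa$, the $i$th row of $\tilde{M}(z)$ equals $-\sqrt{\pi_i(z)(1-p_i)/p_i}$ times your $i$th standardized indicator, with $p_i=1-e^{-\lambda q_i}$.

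\textbf{What your route buys.} Applying orthogonality directly to the single function $A$ lets you bypass Theorem~\ref{thm:main}, the matrix identity, and the $L_\infty\to L_1$ versus $L_2$ relaxation. Before you use $\mathrm{Var}\leq\frac14$, it also yields an algorithm-dependent refinement,
$\mathbb{E}_{Z}\bigl[\sup_k\{\pi_k(Z)/(e^{\lambda q_k}-1)\}\cdot \mathrm{Var}(A\mid Z)\bigr]$, where $\mathrm{Var}(A\mid Z)$ is the variance of $\alg$ under $Q(\cdot\mid Z)$. It also carries over to Hilbert-space-valued outputs by applying Bessel coordinatewise.

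\textbf{The factor $\frac14$.} This improves on the paper's displayed statement, not on its argument. The paper's proof bounds $\|T_{P,Q}\|_{L_\infty\to L_1}$ by $\lambda^{-1}\mathbb{E}[\sup_i\pi_i(Z)/q_i]$, and Theorem~\ref{thm:main} then supplies the $\frac14$.

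\textbf{Nothing is lost on your side.} By~\eqref{eqn:T_norm_as_sup} and Lemma~\ref{lem:operator_avg}, a bound holding uniformly over all $[0,1]$-valued $\alg$ is the same as a bound on $\frac14\|T_{P,Q}\|_{L_\infty\to L_1}$. What the paper's route buys is mainly expository: it presents the example as an instance of the general operator-norm machinery.
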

\noindent In particular, if $\sup_i \{\pi_i(Z)/q_i\}$ is bounded (which may be possible if the dependence between~$\Omega$ and $Z$ is not too strong), we see that the stability guarantee holds with parameter $\asymp \lambda^{-1}$. 

For intuition, suppose that the original distribution $\pi$ typically leads to an $\epsilon$ portion of the data being missing (e.g., $\approx \epsilon n m$ entries are missing, from an $n\times m$ matrix). The Poissonized missingness mechanism will then lead to $\approx \lambda \epsilon$ missingness, since we have $\EE{M}=\lambda$ (e.g., $\approx \lambda \epsilon n m$ entries are missing, from an $n\times m$ matrix). This means that to ensure that stability holds with parameter $\asymp \lambda^{-1}$, we pay the price of increasing the fraction of missing data by the factor $\lambda$ in the ensembling procedure.

\section{Connections with privacy}
\label{Sec:Privacy}

Our framework for studying stability can be interpreted as saying that the channel $Q$ needs to be sufficiently noisy so as to `mask' the difference between the data $Z$ and its perturbed version $Z'$. On the surface, this intuition resembles the idea of privacy: it seems that we need the distributions $Q(\cdot\mid Z)$ and $Q(\cdot\mid Z')$ to be similar when $(Z,Z')\sim P$. Indeed, the following proposition makes this connection explicit:
\begin{proposition}\label{prop:compare_to_dtv} The operator $T_{P,Q}$ defined above satisfies
    \[\|T_{P,Q}\|_{L_\infty\to L_1} \leq 4\Ep{(Z,Z')\sim P}{\Big(\dtv(Q(\cdot \mid Z),Q(\cdot\mid Z'))\Big)^2},\]
    where $\dtv$ denotes the total variation distance.
\end{proposition}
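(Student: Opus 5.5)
The plan is to start from the variational form~\eqref{eqn:T_norm_as_sup} of the operator norm, rewrite the quadratic form through Lemma~\ref{lem:operator_avg}, and then bound the result pointwise in $(Z_0,Z_1)$ using the total variation distance.

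Fix any measurable $f:\cZ\to\R$ with $\|f\|_{L_\infty}\leq 1$. Applying Lemma~\ref{lem:operator_avg} with $g=f$ (or expanding the definition of $K_{P,Q}$ directly and using Fubini, which is legitimate since $f$ is bounded and the $Q(\cdot\mid z)$ are probability densities) gives
\[\int_{\cZ}\int_{\cZ} K_{P,Q}(z,z')f(z)f(z')\;\mathsf{d}\mu(z)\;\mathsf{d}\mu(z') = \Ep{(Z_0,Z_1)\sim P}{\big(f_Q(Z_0)-f_Q(Z_1)\big)^2}.\]
The key pointwise bound is then, for any fixed pair $(z_0,z_1)$,
\[|f_Q(z_0)-f_Q(z_1)| = \left|\int_{\cZ} f(w)\big(Q(w\mid z_0)-Q(w\mid z_1)\big)\;\mathsf{d}\mu(w)\right| \leq \|f\|_{L_\infty}\int_{\cZ}|Q(w\mid z_0)-Q(w\mid z_1)|\;\mathsf{d}\mu(w) \leq 2\,\dtv\big(Q(\cdot\mid z_0),Q(\cdot\mid z_1)\big),\]
using $\dtv = \frac{1}{2}\|\cdot\|_{L_1}$ for densities with respect to $\mu$.

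Squaring this bound and taking the expectation over $(Z_0,Z_1)\sim P$ shows that the quadratic form is at most $4\,\Ep{P}{\dtv(Q(\cdot\mid Z_0),Q(\cdot\mid Z_1))^2}$. Taking the supremum over $f$ and invoking~\eqref{eqn:T_norm_as_sup} gives the claim.

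No step here is a real obstacle. The only point needing care is the justification of the interchange of integrals, and boundedness of $f$ together with integrability of the densities settles it. The equivalence~\eqref{eqn:T_norm_as_sup} was already asserted in the paper, so it can be used directly. Alternatively, one could bound $\|T_{P,Q}f\|_{L_1}$ directly, but then one would need Cauchy--Schwarz rather than squaring a single term, so the quadratic-form route is cleaner.
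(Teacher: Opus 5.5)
Your proof is correct and is essentially the paper's argument: the paper also rewrites the norm via Lemma~\ref{lem:operator_avg} and bounds each difference $|f_Q(Z)-f_Q(Z')|$ pointwise by $2\dtv\big(Q(\cdot\mid Z),Q(\cdot\mid Z')\big)$. The only cosmetic difference is that the paper uses the bilinear form $\sup_{f,g}\Ep{P}{\big(f_Q(Z)-f_Q(Z')\big)\big(g_Q(Z)-g_Q(Z')\big)}$ instead of the quadratic form~\eqref{eqn:T_norm_as_sup}, and on that route no Cauchy--Schwarz is needed either (contrary to your closing remark), since each of the two factors is bounded by $2\dtv$ separately.
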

This upper bound relates to the statistical framework of \emph{total variation privacy} \citep{barber2014privacy}: for instance, for the setting of deleting one data point, we might impose a privacy constraint
\begin{equation}\label{eqn:tv_privacy}\dtv(Q(\cdot\mid z),Q(\cdot\mid z_{-i}))\leq \epsilon \textnormal{ for all $z=(z_1,\dots,z_n)$ and all $i\in[n]$}.\end{equation}
(This is strictly weaker than the more commonly used \emph{differential privacy} condition \citep{dwork2006calibrating}; see \citet{dwork2014algorithmic} and \citet{su2025statistical} for an overview of various privacy frameworks.)

Under a total variation privacy assumption~\eqref{eqn:tv_privacy}, Proposition~\ref{prop:compare_to_dtv} ensures $\|T_{P,Q}\|_{L_\infty\to L_1} \leq 4\epsilon^2$, which then leads to a stability guarantee $\beta^2_P(\alg_Q)\leq \epsilon^2$ by Theorem~\ref{thm:main}.
However, the inequality in Proposition~\ref{prop:compare_to_dtv} is often extremely loose. 
We can see this in the settings of our two examples:
\begin{customexample}{\ref{example1}+\ref{exampleA}}
Recall that $Q(\cdot\mid z)$ is the distribution over all bags $r$ given by probabilities $p_r$ (as in~\eqref{eqn:define_A_bag}), while $Q(\cdot\mid z_{-i})$ is the same distribution conditioned on the event $i\not\in r$ (as in~\eqref{eqn:define_A_bag_i}). Since this event has probability $1-\pi_i$, we can therefore calculate
$\dtv(Q(\cdot\mid z),Q(\cdot\mid z_{-i})) = \pi_i$.
In particular, in the special case of assumptions~\eqref{eqn:soloff_assumptions},
\[\dtv(Q(\cdot\mid z),Q(\cdot\mid z_{-i})) = p\]
for all $i\in[n]$. 
Therefore, Proposition~\ref{prop:compare_to_dtv} yields the bound
\[\sup_{z=(z_1,\dots,z_n)}\frac{1}{n}\sum_{i=1}^n \big(\alg(z) - \alg(z_{-i})\big)^2 \leq p^2.\]
This is a much weaker bound than the stability guarantee~\eqref{eqn:soloff_bound} (which recovers the upper bound $ \frac{1}{4(n-1)} \cdot \frac{p}{1-p}$ established by \citet{soloff2024bagging}). In particular, this bound does not decrease with $n$, and thus illustrates the looseness of Proposition~\ref{prop:compare_to_dtv}.
\end{customexample}

\begin{customexample}{\ref{example2}+\ref{exampleB}}
Recall that $Q(w\mid Z) = \prod_{i=1}^n h(w_i-Z_i)$. Let $Z' = Z\circ\mathbf{1}_{S^c} + \zeta\circ\mathbf{1}_S$ denote the corrupted version of the data $Z$.
Then we can calculate
\[\dtv\big(Q(\cdot\mid Z),Q(\cdot\mid Z')\big) = \Delta^{\textnormal{TV}}_h(\zeta_S-Z_S),\]
where we define
\[\Delta^{\textnormal{TV}}_h(x) = \dtv(x+W\| W)\textnormal{ with $W=(W_1,\dots,W_s)$ for $W_1,\dots,W_s\iidsim h$}\]
for $x\in\R^s$.
For instance, if $h$ is given by the $\mathcal{N}(0,\sigma^2)$ density and we assume $\|Z\|_\infty,\|\zeta\|_\infty\leq B$ (as in the discussion after Proposition~\ref{prop:example2}), we then have $\Delta^{\textnormal{TV}}_h(x)\leq \|x\|_2/\sqrt{2\pi}\sigma$ and so $\dtv\big(Q(\cdot\mid Z),Q(\cdot\mid Z')\big)\leq 2B\sqrt{s}/\sqrt{2\pi}\sigma$.  Therefore, Proposition~\ref{prop:compare_to_dtv} implies
\[\frac{1}{n}\sum_{i=1}^n\Ep{Z\sim \pi, \zeta\sim \nu}{\big(\alg_{\textnormal{smth}}(Z) - \alg_{\textnormal{smth}}(Z\circ\mathbf{1}_{S^c}+\zeta\circ\mathbf{1}_S\big)^2} \leq \frac{8B^2s}{\pi \sigma^2}.\]
Since this bound does not decrease with $n$, this is a weaker bound than the stability guarantee obtained in~\eqref{eqn:example2_gaussian_bdd} under the same assumptions, thus providing another illustration of the gap in Proposition~\ref{prop:compare_to_dtv}.
\end{customexample}

In other words, we can see via these examples that it is not the case that stability is simply a consequence of (total variation) privacy.  The stability achieved by ensembling (which is characterized by $\|T_{P,Q}\|_{L_\infty\to L_1}$, as in the results of Theorem~\ref{thm:main}) may often be much stronger than the amount of stability implied via privacy (i.e., via the bound in Proposition~\ref{prop:compare_to_dtv}).

\section{Discussion}
\label{Sec:Discussion}

Our goal in this work has been to introduce a unified framework that quantifies the stability guarantees afforded by ensembling.  We have seen that different types of data perturbation can be stabilized through different ensembling strategies.

These results naturally lead to some important open questions. First, our theoretical guarantees rely on a boundedness assumption: we assume in Theorem~\ref{thm:main} that $\alg$ returns outputs lying in $[0,1]$ (or, in the extension to Hilbert space-valued output in Appendix~\ref{sec:HSExtension}, that the output lies in a bounded subset of a Hilbert space). While this is reasonable for certain applications, relaxing these conditions or allowing for data-dependent bounds is an important direction for future work (see \citet{soloff2024bagging} for some extensions of this type in the specific setting of bagging for stability with respect to data deletion).

Next, the goal of ensembling in this setting is to provide stability while retaining the good performance of the original algorithm $\alg$---that is, we would like to return output that is similar to the output of the base algorithm $\alg$, if possible (without this aim, one could achieve stability trivially: simply return a constant output, or the output of a provably stable algorithm, while ignoring $\alg$). It would be interesting to quantify the extent to which different ensembling strategies navigate this tradeoff between the resulting stability guarantee, and the deviation from the original $\alg$. 

Finally, there are many practical notions of stability that we have not considered as examples here: for instance, stability with respect to a choice of prior; with respect to a choice of bandwidth or tuning parameter; with respect to the initialization point in an optimization problem; etc. In our framework, the `data' $Z$ can denote any input to the algorithm---that is, $Z$ can contain a choice of tuning parameters, etc, in addition to the random variables that comprise the input data. Therefore, it would be interesting to study whether our framework can give meaningful stability guarantees for questions of this type.

\subsection*{Acknowledgements}
R.F.B. was supported by the Office of Naval Research via grant N00014-24-1-2544. R.J.S. was supported by European Research Council Advanced Grant 101019498.
\bibliographystyle{plainnat}
\bibliography{bib}

\appendix

\section{Proofs}\label{sec:proofs_appendix}
\subsection{Proofs of main results}

\begin{proof}[Proof of Lemma~\ref{lem:operator_avg}] We calculate
    \begin{align*}
    &\int_{\cZ} f(z)\cdot  [T_{P,Q}g](z)\;\mathsf{d}\mu(z)\\
    &=\int_{\cZ} f(z) \left[\int_{\cZ} K_{P,Q}(z,z')\cdot g(z')\;\mathsf{d}\mu(z')\right]\;\mathsf{d}\mu(z)\\
    &=\int_{\cZ} \int_{\cZ} \Ep{P}{\big(Q(z\mid Z_0) - Q(z\mid Z_1)\big)\cdot \big(Q(z'\mid Z_0) - Q(z'\mid Z_1)\big)}\cdot f(z)g(z')\;\mathsf{d}\mu(z')\;\mathsf{d}\mu(z)\\
    &=\Ep{P}{\int_{\cZ} \int_{\cZ} \big(Q(z\mid Z_0) - Q(z\mid Z_1)\big)\cdot \big(Q(z'\mid Z_0) - Q(z'\mid Z_1)\big)\cdot f(z)g(z')\;\mathsf{d}\mu(z')\;\mathsf{d}\mu(z)}\\
    &=\Ep{P}{\int_{\cZ} \big(Q(z\mid Z_0) - Q(z\mid Z_1)\big)\cdot f(z)\;\mathsf{d}\mu(z) \cdot \int_{\cZ} \big(Q(z'\mid Z_0) - Q(z'\mid Z_1)\big)\cdot g(z')\;\mathsf{d}\mu(z')}\\
    &=\Ep{P}{\big(f_Q(Z_0)-f_Q(Z_1)\big)\cdot \big(g_Q(Z_0)-g_Q(Z_1)\big)},
\end{align*}
where the second step holds by definition of $K_{P,Q}$, the third step holds by Fubini's theorem, and the last step holds by definition of $f_Q,g_Q$.
\end{proof}

\begin{proof}[Proof of Theorem~\ref{thm:universality}]

We can assume $|\cZ|>1$ to avoid the trivial case.
Let $z^{(0)},z^{(1)}\in\cZ$ be distinct points. Define
\[\alg^*(z) = \One{z = z^{(1)}},\]
and define an ensembling channel
\[Q(\cdot\mid z) = (1-\alg(z)) \cdot \delta_{z^{(0)}} + \alg(z) \cdot \delta_{z^{(1)}}.\]
For any input data $z$, this channel places all its mass on the two data points, $z^{(0)}$ and $z^{(1)}$, with weights determined by the output $\alg(z)$ of the original algorithm. (Recall that we require that $Q(\cdot\mid z)$ is a density with respect to a base measure $\mu$, for all $z$; this holds by choosing $\mu$ to be the counting measure.) 
We then calculate that, for any $z\in\cZ$,
\[[\alg^*]_Q(z) = \Ep{Z\sim Q(\cdot\mid z)}{\alg^*(Z)} = \Ep{Z\sim Q(\cdot\mid z)}{\One{Z=z^{(1)}}} = \Pp{Z\sim Q(\cdot\mid z)}{Z=z^{(1)}} = \alg(z).\]
In other words, $\alg = [\alg^*]_Q$, i.e., the original algorithm $\alg$ is equivalent to the ensembled version of the base algorithm $\alg^*$.

Next, let $P$ be any distribution on $\cZ$.
By Theorem~\ref{thm:main} we know that
\[\beta^2_P(\alg) = \beta^2_P([\alg^*]_Q) \leq \frac{1}{4}\|T_{P,Q}\|_{L_\infty\to L_1}.\]
We next need to show that this is in fact an equality. We first calculate the associated kernel $K_{P,Q}$. 
We have
\begin{multline*}K_{P,Q}(z^{(1)},z^{(1)}) = \Ep{(Z_0,Z_1)\sim P}{\big(Q(z^{(1)}\mid Z_0) - Q(z^{(1)}\mid Z_1)\big)^2}\\ = \Ep{(Z_0,Z_1)\sim P}{\big(\alg(Z_0)-\alg(Z_1)\big)^2} = \beta^2_P(\alg),\end{multline*}
by definition of $Q$.
Similar calculations verify that
\[K_{P,Q}(z^{(0)},z^{(0)}) =  \beta^2_P(\alg), \quad 
K_{P,Q}(z^{(0)},z^{(1)}) = K_{P,Q}(z^{(1)},z^{(0)}) =  -\beta^2_P(\alg). \]
And, $K_{P,Q}(z,z')=0$ for any other pair $(z,z')$, since $Q$ only returns outputs in $\{z^{(0)},z^{(1)}\}$.

Next we calculate the operator $T_{P,Q}$.
For any function $f:\cZ\to\R$, by definition of $T_{P,Q}$ we can verify that
\[[T_{P,Q}f](z) = \begin{cases}\beta^2_P(\alg) \cdot (f(z^{(0)})-f(z^{(1)})), & z=z^{(0)},\\-\beta^2_P(\alg) \cdot (f(z^{(0)})-f(z^{(1)})) , & z=z^{(1)}, \\ 0, & \textnormal{ otherwise}.\end{cases}\]
Therefore, for any $f$ with $\|f\|_{L_\infty}\leq 1$,
\[\|T_{P,Q}f\|_{L_1} = 2\beta^2_P(\alg) \cdot |f(z^{(0)})-f(z^{(1)})| \leq 4\beta^2_P(\alg) \cdot\|f\|_{L_\infty}\leq 4\beta^2_P(\alg),\]
which completes the proof.
\end{proof}

\subsection{Proofs for examples}

\begin{proof}[Proof of Proposition~\ref{prop:example1}]
First we recall how this example can be written in our unified notation. Given the definition of the bagged algorithm $\alg_{\textnormal{bag}}$, we can see that the ensembling channel $Q$ satisfies
\[Q(\cdot\mid z) = \sum_r p_r \delta_{z_r}, \quad Q(\cdot\mid z_{-i}) = \frac{\sum_r \one{i\not\in r}p_r \delta_{z_r}}{\sum_r  \one{i\not\in r}p_r}.\]
(We can view these as densities with respect to the counting measure.)
We also have 
\[P = \frac{1}{n}\sum_{i=1}^n\delta_{(z,z_{-i})},\]
since the perturbed data is determined by deleting one index $i$ at random. 

Below, we will verify that the resulting operator $T_{P,Q}$ can be characterized by the following identity: for any bounded measurable functions $f,g$,
\begin{equation}\label{eqn:convert_example1_to_matrix}
    \int_{z'} f(z')\cdot  [T_{P,Q}g](z')\;\mathsf{d}\mu(z') = \sum_{i=1}^n\left(\sum_r M_{i,r} f(z_r)\right)\cdot \left( \sum_r M_{i,r} g(z_r)\right),
\end{equation}
where we define a matrix $M$ with entries\footnote{To define this matrix $M$ formally, note that the bag $r$ takes values in the space $\textnormal{seq}_{[n]}$, denoting the set of all sequences $(i_1,\dots,i_k)$ with entries in $[n]$, of arbitrary length $k$. Therefore $M$ is an array indexed by $(i,r)\in[n]\times \textnormal{seq}_{[n]}$, and in particular, has countably infinitely many columns. In practice, most common bagging schemes do not allow for bags of unbounded length and thus $M$ can be viewed as finite-dimensional (i.e., only finitely many columns of $M$ can contain any nonzero values), but the calculations here hold regardless of whether the dimension of $M$ is finite or countably infinite.}
\[M_{i,r} =  \frac{1}{\sqrt{n}} \cdot p_r \left( 1 - \frac{\one{i\not \in r} }{\sum_{r'} \one{i\not \in r'} p_{r'} }\right).\]
Next let $v,w$ be vectors with entries $v_r = f(z_r)$, $w_r = g(z_r)$. Therefore,
\[\int_{z'} f(z')\cdot  [T_{P,Q}g](z')\;\mathsf{d}\mu(z') = \sum_{i=1}^n\left(\sum_r M_{i,r} v_r\right)\cdot \left( \sum_r M_{i,r} w_r\right) = v^\top M^\top M w .\]
Therefore,
\[\sup_{\|f\|_{L_\infty},\|g\|_{L_\infty}\leq 1} \int_{z'} f(z')\cdot  [T_{P,Q}g](z')\;\mathsf{d}\mu(z') = \sup_{\|v\|_\infty,\|w\|_\infty\leq 1}v^\top M^\top M w,\]
or in other words,
\begin{equation}\label{eqn:matrix_exact}\|T_{P,Q}\|_{L_\infty\to L_1} = \|M^\top M\|_{\infty\to 1},\end{equation}
where $\|A\|_{\infty\to 1} = \sup_{\|w\|_\infty\leq 1}\|A w\|_1$.

Next, define another matrix $\tilde{M}$ with entries
\[\tilde{M}_{i,r} =  \frac{1}{\sqrt{n}} \cdot \sqrt{p_r} \left( 1 - \frac{\one{i\not \in r}}{\sum_{r'} \one{i\not \in r'} p_{r'} }\right),\]
so that $M_{i,r} = \sqrt{p_r}\tilde{M}_{i,r}$. Since $(p_r)$ is a probability vector, a straightforward calculation shows that\footnote{We remark that this step, where we bound an $L_\infty\to L_1$ norm via an $L_2\to L_2$ norm, is analogous to the relaxation of Theorem~\ref{thm:main} given in Corollary~\ref{cor:main_L2}; here we choose to show these steps directly, rather than relying on the corollary, since the expression~\eqref{eqn:matrix_exact} gives an interpretable exact calculation of the stability properties for this example.}
\[\|M^\top M\|_{\infty\to 1}
\leq \|\tilde{M}^\top \tilde{M}\|_2 = \|\tilde{M}\tilde{M}^\top\|_2.\] 
We can also calculate, for each $i,j\in[n]$,
\[(\tilde{M}\tilde{M}^\top)_{ij} = \frac{1}{n}\sum_r p_r \left( 1 - \frac{\one{i\not \in r} }{\sum_{r'} \one{i\not \in r'} p_{r'} }\right)\cdot \left( 1 - \frac{\one{j\not \in r} }{\sum_{r'} \one{j\not \in r'} p_{r'} }\right) = (M_{\textnormal{bag}})_{ij},\]
and so combining everything, we have showed that
\[\|T_{P,Q}\|_{L_\infty\to L_1}\leq\|M_{\textnormal{bag}}\|_2.\]
Applying Theorem~\ref{thm:main}, this verifies the stability claim for $\alg_{\textnormal{bag}}$.

To complete the proof, we need to verify the identity stated in~\eqref{eqn:convert_example1_to_matrix}. First, let $f_Q$ be defined as in Lemma~\ref{lem:operator_avg}: that is, we have
\[f_Q(z') = \Ep{Z\sim Q(\cdot\mid z')}{f(Z)}.\]
We can evaluate this function at $z'=z$ as
\[f_Q(z) = \Ep{Z\sim Q(\cdot\mid z)}{f(Z)} = \sum_r Q(z_r\mid z) \cdot f(z_r) = \sum_r p_r f(z_r),\]
and similarly at $z'=z_{-i}$ (for any $i\in[n]$) as
\[f_Q(z) = \Ep{Z\sim Q(\cdot\mid z)}{f(Z)} = \sum_r Q(z_r\mid z_{-i}) \cdot f(z_r) = \frac{\sum_r\one{i\not\in r} p_r f(z_r)}{\sum_r \one{i\not\in r} p_r},\]
by definition of $Q(\cdot\mid z)$ and $Q(\cdot\mid z_{-i})$. The analogous calculations hold for $g_Q$. Then
\begin{align*}
    &\int_{z'} f(z')\cdot  [T_{P,Q}g](z')\;\mathsf{d}\mu(z') \\
    &= \Ep{(Z_0,Z_1)\sim P}{\big(f_Q(Z_0)-f_Q(Z_1)\big)\cdot \big(g_Q(Z_0)-g_Q(Z_1)\big)}\textnormal{ by Lemma~\ref{lem:operator_avg}}\\
    &= \frac{1}{n}\sum_{i=1}^n\big(f_Q(z)-f_Q(z_{-i})\big)\cdot \big(g_Q(z)-g_Q(z_{-i})\big)\\
    &= \frac{1}{n}\sum_{i=1}^n\left(\sum_r p_r f(z_r) - \frac{\sum_r \one{i\not \in r} p_r f(z_r)}{\sum_r \one{i\not \in r} p_r }\right)\cdot \left( \sum_r p_r g(z_r) - \frac{\sum_r \one{i\not \in r} p_r g(z_r)}{\sum_r \one{i\not \in r} p_r }\right)\\
    &= \frac{1}{n}\sum_{i=1}^n\left(\sum_r \left(1 - \frac{\one{i\not\in r}}{\sum_{r'}\one{i\not\in r'} p_{r'}}\right) \cdot p_r f(z_r) \right)\cdot\left(\sum_r \left(1 - \frac{\one{i\not\in r}}{\sum_{r'}\one{i\not\in r'} p_{r'}}\right) \cdot p_r g(z_r) \right)\\
    &= \sum_{i=1}^n\left(\sum_r M_{i,r} f(z_r)\right)\cdot \left( \sum_r M_{i,r} g(z_r)\right),
\end{align*}
by definition of $M$.
\end{proof}

\begin{proof}[Proof of Proposition~\ref{prop:example2}]

First, recall that by~\eqref{eqn:T_norm_as_sup}, it holds that
\[\|T_{P,Q}\|_{L_\infty\to L_1} = \sup\left\{ \int_{\cZ} \int_{\cZ} f(z_0)f(z_1)K_{P,Q}(z_0,z_1)\;\mathsf{d}\mu(z_0)\;\mathsf{d}\mu(z_1)\right\},\]
where the supremum is taken over all functions $f$ with $\|f\|_{L_\infty} \leq 1$. We will now fix any such function $f$, and will bound this integral for the example.

Let $z_0,z_1\in\R^n$.  Recall that when $(Z,Z') \sim P$, the perturbed data point $Z'$ is defined by replacing $Z_i$ with $\zeta_i$ for all $i\in S$, where $S \in \binom{[n]}{s}$ is sampled uniformly at random, and where $Z\sim \pi,\zeta\sim\nu$. Then
\[\frac{Q(z_0\mid Z')}{Q(z_0\mid Z)} = \frac{\prod_{i=1}^n h(z_{0i} - Z'_i)}{\prod_{i=1}^n h(z_{0i}-Z_i)} = \frac{\prod_{i\in S} h(z_{0i} - \zeta_i)}{\prod_{i\in S} h(z_{0i}-Z_i)},\]
and similarly for $z_1$ in place of $z_0$, which yields
\begin{align*}
    &K_{P,Q}(z_0,z_1)
    =\Ep{(Z,Z')\sim P}{(Q(z_0\mid Z)-Q(z_0\mid Z'))(Q(z_1\mid Z) - Q(z_1\mid Z'))}\\
    &=\Ep{(Z,Z')\sim P}{Q(z_0\mid Z)Q(z_1\mid Z)\left(\frac{\prod_{i=1}^n h(z_{0i} - Z'_i)}{\prod_{i=1}^n h(z_{0i}-Z_i)}-1\right)\left(\frac{\prod_{i=1}^n h(z_{1i} - Z'_i)}{\prod_{i=1}^n h(z_{1i}-Z_i)}-1\right)}\\
    &=\frac{1}{\binom{n}{s}}\sum_{S\in\binom{[n]}{s}}\Ep{\substack{Z\sim \pi\\\zeta\sim \nu}}{Q(z_0\mid Z)Q(z_1\mid Z)\left(\frac{\prod_{i\in S} h(z_{0i} - \zeta_i)}{\prod_{i\in S} h(z_{0i}-Z_i)}-1\right)\left(\frac{\prod_{i\in S} h(z_{1i} - \zeta_i)}{\prod_{i\in S} h(z_{1i}-Z_i)}-1\right)}.
\end{align*}
Therefore, applying Fubini's theorem, we can write
\begin{multline*}
    \int_{\cZ} \int_{\cZ} f(z_0)f(z_1)K_{P,Q}(z_0,z_1)\;\mathsf{d}\mu(z_0)\;\mathsf{d}\mu(z_1)\\
    = \frac{1}{\binom{n}{s}}\sum_{S\in\binom{[n]}{s}}\EE{f(Z_0)f(Z_1) \cdot \left(\frac{\prod_{i\in S} h(Z_{0i} - \zeta_i)}{\prod_{i\in S} h(Z_{0i}-Z_i)}-1\right)\left(\frac{\prod_{i\in S} h(Z_{1i} - \zeta_i)}{\prod_{i\in S} h(Z_{1i}-Z_i)}-1\right)},
\end{multline*}
where the expected value is taken with respect to 
\[Z\sim \pi, \quad \zeta\sim\nu, \quad Z_0,Z_1 \mid Z \iidsim Q(\cdot\mid Z).\]
Since $Z_0,Z_1$ are conditionally i.i.d.\ given $Z,\zeta$, we can rewrite this again as
\[ 
\frac{1}{\binom{n}{s}}\sum_{S\in\binom{[n]}{s}}\EE{\EEst{f(Z_0) \cdot \left(\frac{\prod_{i\in S} h(Z_{0i} - \zeta_i)}{\prod_{i\in S} h(Z_{0i}-Z_i)}-1\right)}{Z,\zeta}^2}.
\]
Next, define
\[f_S(Z_{0S}) = \EEst{f(Z_0)}{Z_{0S},Z,\zeta}.\]
By the tower law, for each $S$,
\begin{align*}
    &\EEst{f(Z_0) \cdot \left(\frac{\prod_{i\in S} h(Z_{0i} - \zeta_i)}{\prod_{i\in S} h(Z_{0i}-Z_i)}-1\right)}{Z,\zeta}\\
    &=\EEst{\EEst{f(Z_0) \cdot \left(\frac{\prod_{i\in S} h(Z_{0i} - \zeta_i)}{\prod_{i\in S} h(Z_{0i}-Z_i)}-1\right)}{Z_{0S},Z,\zeta}}{Z,\zeta}\\
    &=\EEst{\EEst{f(Z_0)}{Z_{0S},Z,\zeta} \cdot \left(\frac{\prod_{i\in S} h(Z_{0i} - \zeta_i)}{\prod_{i\in S} h(Z_{0i}-Z_i)}-1\right)}{Z,\zeta}\\
    &=\EEst{f_S(Z_{0S})\cdot \left(\frac{\prod_{i\in S} h(Z_{0i} - \zeta_i)}{\prod_{i\in S} h(Z_{0i}-Z_i)}-1\right)}{Z,\zeta}.
\end{align*}
Next consider the random variable $\frac{\prod_{i\in S} h(Z_{0i} - \zeta_i)}{\prod_{i\in S} h(Z_{0i}-Z_i)}-1$. By construction, we have
\[\EEst{\frac{\prod_{i\in S} h(Z_{0i} - \zeta_i)}{\prod_{i\in S} h(Z_{0i}-Z_i)}-1}{Z,\zeta}=0, \quad \EEst{\left(\frac{\prod_{i\in S} h(Z_{0i} - \zeta_i)}{\prod_{i\in S} h(Z_{0i}-Z_i)}-1\right)^2}{Z,\zeta} = \Delta_h(Z_S-\zeta_S).\]
Therefore, by Cauchy--Schwarz,
\[\EEst{f_S(Z_{0S})\cdot \left(\frac{\prod_{i\in S} h(Z_{0i} - \zeta_i)}{\prod_{i\in S} h(Z_{0i}-Z_i)}-1\right)}{Z,\zeta}\leq \textnormal{Var}(f_S(Z_{0S})\mid Z,\zeta)^{1/2} \cdot \Delta_h(Z_S-\zeta_S)^{1/2}.\]
Returning to our work above we then have
\begin{multline*}
    \int_{\cZ} \int_{\cZ} f(z_0)f(z_1)K_{P,Q}(z_0,z_1)\;\mathsf{d}\mu(z_0)\;\mathsf{d}\mu(z_1)\\
    \leq  \frac{1}{\binom{n}{s}}\sum_{S\in\binom{[n]}{s}}\EE{\textnormal{Var}(f_S(Z_{0S})\mid Z,\zeta) \cdot \Delta_h(Z_S-\zeta_S)}\\
    \leq  \EE{\frac{1}{\binom{n}{s}}\sum_{S\in\binom{[n]}{s}}\textnormal{Var}(f_S(Z_{0S})\mid Z,\zeta) \cdot \max_{S\in\binom{[n]}{s}}\Delta_h(Z_S-\zeta_S)}.
\end{multline*}
Since we have  $f_S(Z_{0S}) = \EEst{f(Z_0)}{Z_{0S},Z,\zeta}$ by definition, where $Z_{01},\dots,Z_{0n}$ are independent conditional on $Z,\zeta$, we can apply Lemma~\ref{lem:sum_var_subset} (given below), conditional on $Z,\zeta$ to obtain
\[
\frac{1}{\binom{n}{s}}\sum_{S\in\binom{[n]}{s}}\textnormal{Var}(f_S(Z_{0S})\mid Z,\zeta) \leq \frac{s}{n} \textnormal{Var}(f(Z_0)\mid Z,\zeta).
\]
And, since $\|f\|_{L_\infty}\leq 1$, $\textnormal{Var}(f(Z_0)\mid Z,\zeta)\leq 1$. Therefore,
\[\int_{\cZ} \int_{\cZ} f(z_0)f(z_1)K_{P,Q}(z_0,z_1)\;\mathsf{d}\mu(z_0)\;\mathsf{d}\mu(z_1)\leq \frac{s}{n}\EE{\max_{S\in\binom{[n]}{s}}\Delta_h(Z_S-\zeta_S)}.\]
In combination with Theorem~\ref{thm:main}, this completes the proof.
\end{proof}

\begin{proof}[Proof of Proposition~\ref{prop:example3}]
First we relate this example to our unified notation. The distribution $P$ on $(Z,Z')$ can be defined as follows:
\[\textnormal{$P$ is the distribution of $(Z,Z_\Omega)$ induced by $(Z,\Omega)\sim\pi$}.\]
The ensembling kernel $Q$ is given by
\[Q(\cdot\mid z) = \sum_{m\geq 0} \sum_{j_1,\dots,j_m\geq 1} \frac{\lambda^me^{-\lambda}}{m!} \cdot \prod_{k=1}^m q_{j_k}\cdot  \delta_{z_{\cap_{k=1}^m \omega_{j_k}}}.\]
(Note that for any $z$, $Q(\cdot \mid z)$ can be expressed as a density with respect to the counting measure $\mu$ on $\cZ$, since it is a discrete distribution.)

Now we rewrite the ensembling procedure one more time. Given a binary vector $\bfa=(a_1,a_2,\dots)$ with finitely many $1$s, let $\omega_{\bfa}$ denote the intersection of all selected masks as indicated by the $1$s, i.e.,
\[\omega_{\bfa} = \cap_{j\geq 1, a_j = 1} \omega_j.\]
(As before, we allow for an empty intersection: if $\bfa=(0,0,\dots)$ then $\omega_{\bfa}=\cI$, i.e., no missingness.) 

Under the Poissonized missingness mechanism, to generate a sample from $Q(\cdot\mid z)$, we return $z_\Omega$ where, to construct $\Omega$, we draw $M\sim\textnormal{Poisson}(\lambda)$ and then sample $M$ times from the $\textnormal{Multinomial}(q)$ distribution. By properties of the Poisson and Multinomial, this is equivalent to sampling $n_j\sim\textnormal{Poisson}(\lambda q_j)$, independently for each $j\geq 1$, and then the set $\{j_1,\dots,j_M\}$ consists of (a random permutation of) these selected indices (i.e., each index $j$ is included $n_j$ times). By construction, the output $z_\Omega$ of the ensembling kernel $Q$ is therefore determined by
\[\Omega = \cap_{k=1}^M \omega_{j_k} = \omega_{\bfa}\textnormal{ where }\bfa = (\one{n_1\geq1},\one{n_2\geq1},\dots).\]
In particular, we have $a_j\sim\textnormal{Bernoulli}(1-e^{-\lambda q_j})$, independently for each $j\geq 1$, since $\PP{n_j\geq 1} = \PP{\textnormal{Poisson}(\lambda q_j)\geq 1} = 1-e^{-\lambda q_j}$. Therefore, we can write
\[Q(\cdot\mid z) = \sum_{\bfa} \prod_{j\geq 1, a_j=1}(1-e^{-\lambda q_j}) \cdot \prod_{j\geq 1, a_j = 0}e^{-\lambda q_j} \cdot \delta_{z_{\omega_{\bfa}}} = \sum_{\bfa}  e^{-\lambda} \cdot \prod_{j\geq 1} (e^{\lambda q_j}-1)^{a_j} \cdot \delta_{z_{\omega_{\bfa}}}.\]
In particular, given $(Z,\Omega)\sim\pi$, we can calculate
\[Q(\cdot\mid Z) = \sum_{\bfa} e^{-\lambda}\prod_{j\geq 1}(e^{\lambda q_j}-1)^{a_j} \cdot \delta_{Z_{\omega_{\bfa}}},\]
and, on the event $\Omega=\omega_i$ (for each $i\geq 1$),
\begin{multline*}Q(\cdot\mid Z_\Omega)=Q(\cdot\mid Z_{\omega_i}) = \sum_{\bfa} e^{-\lambda}\prod_{j\geq 1}(e^{\lambda q_j}-1)^{a_j} \cdot \delta_{Z_{\omega_{\bfa}\cap \omega_i}}  \\= \sum_{\bfa} \frac{\one{a_i=1}}{1-e^{-\lambda q_i}}\cdot  e^{-\lambda}\prod_{j\geq 1}(e^{\lambda q_j}-1)^{a_j} \cdot \delta_{Z_{\omega_{\bfa}}}.\end{multline*}

From this point on, the proof follows a similar structure as the proof of Proposition~\ref{prop:example1}.
    In this example, the operator $T_{P,Q}$ can be characterized by the following identity: for any bounded measurable functions $f,g$,
\begin{equation}\label{eqn:convert_example3_to_matrix}
    \int_{\mathcal{Z}} f(z')\cdot  [T_{P,Q}g](z')\;\mathsf{d}\mu(z') = \Ep{Z\sim\pi_Z}{\sum_{i\geq 1}\left(\sum_{\bfa} M_{i,\bfa}(Z) f(Z_{\omega_{\bfa}})\right)\cdot \left( \sum_{\bfa} M_{i,\bfa}(Z) g(Z_{\omega_{\bfa}})\right)},
\end{equation}
where for each $z\in\cZ$, we define a (countably-infinite-dimensional) matrix $M(z)$ with entries
\[M_{i,\bfa}(z) = \sqrt{\pi_i(z)} \cdot e^{-\lambda} \cdot \prod_{j\geq 1} (e^{\lambda q_j}-1)^{a_j} \cdot  \left( 1 - \frac{\one{a_i=1}}{1-e^{-\lambda q_i}}\right),\]
indexed by $i\geq 1$ and by infinite binary vectors $\bfa$ with finitely many $1$s. (The proof of the claim~\eqref{eqn:convert_example3_to_matrix} is analogous to the proof of~\eqref{eqn:convert_example1_to_matrix} in Proposition~\ref{prop:example1}.) Similarly to the proof of Proposition~\ref{prop:example1}, this establishes that
\[\|T_{P,Q}\|_{L_\infty\to L_1} \leq \Ep{Z\sim\pi_Z}{\|M(Z)^\top M(Z)\|_{\infty\to 1}}.\]
Now define a rescaled version of the matrix,
\[\tilde{M}_{i,\bfa}(z) = \sqrt{\pi_i(z) \cdot e^{-\lambda} \cdot \prod_{j\geq 1} (e^{\lambda q_j}-1)^{a_j}} \cdot  \left( 1 - \frac{\one{a_i=1}}{1-e^{-\lambda q_i}}\right).\]
Since $\bfa\mapsto e^{-\lambda} \cdot \prod_{j\geq 1} (e^{\lambda q_j}-1)^{a_j}$ is the probability mass function of a distribution (i.e., of the distribution of $\bfa$ determined by the Poissonized missingness mechanism), as in the proof of Proposition~\ref{prop:example1} we have
\[\|M(z)^\top M(z)\|_{\infty\to 1}\leq \|\tilde{M}(z)^\top \tilde{M}(z)\|_2=\|\tilde{M}(z)\tilde{M}(z)^\top\|_2.\]
Finally, we calculate $\tilde{M}(z)\tilde{M}(z)^\top$. For any $i,k\geq 1$, we compute
\begin{align*}
    &(\tilde{M}(z)\tilde{M}(z)^\top)_{ik}
    =\sum_{\bfa}\tilde{M}_{i,\bfa}(z)\tilde{M}_{k,\bfa}(z)\\
    &=\sqrt{\pi_i(z)\pi_k(z)}\cdot \sum_{\bfa} e^{-\lambda} \cdot \prod_{j\geq 1} (e^{\lambda q_j}-1)^{a_j} \cdot \left( 1 - \frac{\one{a_i=1}}{1-e^{-\lambda q_i}}\right)\cdot \left( 1 - \frac{\one{a_k=1}}{1-e^{-\lambda q_k}}\right)\\
    &=\sqrt{\pi_i(z)\pi_k(z)}\cdot\Ep{\bfa}{\left( 1 - \frac{\one{a_i=1}}{1-e^{-\lambda q_i}}\right)\cdot \left( 1 - \frac{\one{a_k=1}}{1-e^{-\lambda q_k}}\right)}\\
    &=\sqrt{\pi_i(z)\pi_k(z)}\cdot\left(1 - \frac{\Pp{\bfa}{a_i=1}}{1-e^{-\lambda q_i}}- \frac{\Pp{\bfa}{a_k=1}}{1-e^{-\lambda q_k}}+ \frac{\Pp{\bfa}{a_i=a_k=1}}{(1-e^{-\lambda q_i})(1-e^{-\lambda q_k})}\right),
\end{align*}
where the last two lines are computed with respect to the distribution of $\bfa$ under the Poissonized missingness mechanism. Under this distribution, we have $\Pp{\bfa}{a_i=1} = 1-e^{-\lambda q_i}$ and $\Pp{\bfa}{a_k=1}=1-e^{-\lambda q_k}$, so the above simplifies to
\[(\tilde{M}(z)\tilde{M}(z)^\top)_{ik} = \sqrt{\pi_i(z)\pi_k(z)}\cdot\left(\frac{\Pp{\bfa}{a_i=a_k=1}}{(1-e^{-\lambda q_i})(1-e^{-\lambda q_k})} - 1\right).\]
If $i\neq k$, then $a_i\independent a_k$, and so $\Pp{\bfa}{a_i=a_k=1}=(1-e^{-\lambda q_i})(1-e^{-\lambda q_k})$, meaning that
\[(\tilde{M}(z)\tilde{M}(z)^\top)_{ik}=0 \textnormal{ for $i\neq k$}.\]
On the other hand, for the diagonal entries $i=k$, we have $\Pp{\bfa}{a_i=a_k=1}=\Pp{\bfa}{a_i=1} = 1-e^{-\lambda q_i}$, and so
\[(\tilde{M}(z)\tilde{M}(z)^\top)_{ii} = \pi_i(z)\cdot\left(\frac{1-e^{-\lambda q_i}}{(1-e^{-\lambda q_i})^2} - 1\right) = \frac{\pi_i(z)}{e^{\lambda q_i}-1}.\]
Therefore, $\tilde{M}(z)\tilde{M}(z)^\top$ is a diagonal matrix, with
\[\|\tilde{M}(z)\tilde{M}(z)^\top\|_2 = \sup_i (\tilde{M}(z)\tilde{M}(z)^\top)_{ii} = \sup_i \frac{\pi_i(z)}{e^{\lambda q_i}-1}\leq \frac{\sup_i \{\pi_i(z)/q_i\}}{\lambda},\]
since $e^{\lambda q_i}\geq 1 + \lambda q_i$. This completes the proof.
\end{proof}

\subsection{Additional technical results}

\begin{proof}[Proof of Proposition~\ref{prop:compare_to_dtv}]
    By Lemma~\ref{lem:operator_avg}, we have
    \[ \|T_{P,Q}\|_{L_\infty\to L_1} = \sup\left\{ \Ep{(Z,Z')\sim P}{\big(f_Q(Z)-f_Q(Z')\big)\cdot \big(g_Q(Z)-g_Q(Z')\big)} : \|f\|_{L_\infty},\|g\|_{L_\infty}\leq 1\right\},\]
    where $f_Q,g_Q$ are defined as in Lemma~\ref{lem:operator_avg}. For any function $f$ taking values in $[-1,1]$, we can write
    \[f_Q(Z)-f_Q(Z') = \Ep{Z''\sim Q(\cdot\mid Z)}{f(Z'')} - \Ep{Z''\sim Q(\cdot\mid Z')}{f(Z'')} \leq 2\dtv(Q(\cdot\mid Z),Q(\cdot\mid Z')),\]
    by definition of total variation distance. A similar bound holds for $g_Q$. Therefore,
    \[ \Ep{(Z,Z')\sim P}{\big(f_Q(Z)-f_Q(Z')\big)\cdot \big(g_Q(Z)-g_Q(Z')\big)}\leq \Ep{(Z,Z')\sim P}{4\Big(\dtv(Q(\cdot \mid Z),Q(\cdot\mid Z'))\Big)^2}\]
    holds for any $f,g$ with $\|f\|_{L_\infty},\|g\|_{L_\infty}\leq 1$, which completes the proof.
\end{proof}

\begin{lemma}\label{lem:sum_var_subset}
    Fix any integer $n\geq 1$. Let $X_1,\dots,X_n$ be independent and let $f(X_1,\dots,X_n)$ be any square-integrable function. For any subset $S\subseteq[n]$, define a random variable
    \[E_S = \EEst{f(X_1,\dots,X_n)}{X_S},\]
    the conditional expectation given $X_S = (X_i)_{i\in S}$. Then, for any $s\in[n]$,
    \[\frac{1}{\binom{n}{s}}\sum_{|S|=s} \textnormal{Var}(E_S) \leq \frac{s}{n}\cdot \textnormal{Var}(f(X_1,\dots,X_n)).\]
\end{lemma}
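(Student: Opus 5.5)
The natural route is the Efron--Stein / Hoeffding (ANOVA) decomposition of $f$ with respect to the independent coordinates $X_1,\dots,X_n$. Write $F = f(X_1,\dots,X_n)$ and expand
\[
F = \sum_{A\subseteq[n]} F_A,
\]
where $F_A$ depends only on $X_A$. The components satisfy $\EEst{F_A}{X_B}=0$ whenever $A\not\subseteq B$, which makes them mutually orthogonal in $L_2$. Independence of the $X_i$ is exactly what guarantees that this decomposition exists and is orthogonal. The construction is standard: set $F_A=\sum_{B\subseteq A}(-1)^{|A|-|B|}\EEst{F}{X_B}$ and verify the properties by M\"obius inversion.

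The key consequence is that conditioning on $X_S$ acts as a projection onto the components indexed by subsets of $S$:
\[
E_S=\EEst{F}{X_S}=\sum_{A\subseteq S}F_A .
\]
By orthogonality, and because $F_\emptyset=\EE{F}$, this gives
\[
\textnormal{Var}(E_S)=\sum_{\emptyset\neq A\subseteq S}\EE{F_A^2}, \qquad \textnormal{Var}(F)=\sum_{A\neq\emptyset}\EE{F_A^2}.
\]

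Next average over $S$ chosen uniformly from $\binom{[n]}{s}$ and swap the order of summation:
\[
\frac{1}{\binom{n}{s}}\sum_{|S|=s}\textnormal{Var}(E_S)=\sum_{A\neq\emptyset}\EE{F_A^2}\cdot\Pp{S}{A\subseteq S}.
\]
For $|A|=k\ge1$, the probability $\Pp{S}{A\subseteq S}=\binom{n-k}{s-k}/\binom{n}{s}$ is nonincreasing in $k$. Hence it is at most its value at $k=1$, which is $s/n$. Summing then gives the bound $\frac{s}{n}\textnormal{Var}(F)$.

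The main obstacle is establishing the orthogonal decomposition rigorously for general square-integrable $f$ and general (non-discrete) $X_i$. The crucial identity is $\EEst{F_A}{X_B}=F_{A\cap B}$ when $A\not\subseteq B$, which in turn rests on $\EEst{\EEst{F}{X_C}}{X_B}=\EEst{F}{X_{B\cap C}}$; this holds precisely because the coordinates are independent, via Fubini. If I want to avoid the full decomposition, a fallback is to induct on $s$ using the standard Efron--Stein inequality. The decomposition route is cleaner, though, so I would present that one.
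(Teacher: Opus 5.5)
Your argument is correct, but it takes a different route from the paper's. The paper inducts on $n$. For each $i$ it applies the $(n-1)$-variable statement to $E_{-i}=\EEst{F}{X_{-i}}$, using $E_S=\EEst{E_{-i}}{X_S}$ for $S\not\ni i$. It then sums over $i$, recounts the left side as $(n-s)\sum_{|S|=s}\textnormal{Var}(E_S)$, and bounds $\sum_i\textnormal{Var}(E_{-i})\le(n-1)\textnormal{Var}(F)$. That last bound comes from the law of total variance plus the Efron--Stein inequality, cited as a black box. In effect, the case $s=n-1$ of the lemma is exactly Efron--Stein, and the induction carries it down to every smaller $s$.

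Your Hoeffding-decomposition argument proves more: it gives the exact identity
\[
\frac{1}{\binom{n}{s}}\sum_{|S|=s}\textnormal{Var}(E_S)=\sum_{A\neq\emptyset}\frac{\binom{n-|A|}{s-|A|}}{\binom{n}{s}}\,\EE{F_A^2}.
\]
This identity does several things. It explains the constant $s/n$ as the inclusion probability of a single coordinate. It shows that for $s<n$ equality holds iff $F$ is additive, i.e.\ has no interaction components. It also proves Efron--Stein along the way instead of citing it. The price is setting up the $2^n$-term orthogonal decomposition rigorously for general $X_i$, which the paper's induction avoids.

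Two small corrections to your write-up. First, in your last paragraph the identity should read $\EEst{F_A}{X_B}=F_A\,\One{A\subseteq B}$, which is zero when $A\not\subseteq B$, as you correctly stated earlier; it should not read $F_{A\cap B}$. This follows from $\EEst{\EEst{F}{X_C}}{X_B}=\EEst{F}{X_{B\cap C}}$: the alternating signs cancel over subsets of $A\setminus B$. Second, the identity $E_S=\sum_{A\subseteq S}F_A$ is pure M\"obius inversion and needs no independence. Independence is used only for orthogonality of the $F_A$.
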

\noindent We can interpret the lemma as follows: if we condition on $X_S$ for a \emph{random} subset $S$ of size $s$, we expect to capture a proportional amount of the variability of the function, i.e., $\frac{s}{n}$ of the total variance.

\begin{proof}[Proof of Lemma~\ref{lem:sum_var_subset}]
    We will prove this by induction on $n$. For the base case $n=1$, we must have $s=1$ and so the claim holds trivially.

    Now fix some $n\geq 2$. If $s=n$, then again the result is trivial since the only possible subset is $S=[n]$. Now assume $s\leq n-1$. 
    
    Fix any $i\in[n]$ and define $E_{-i} = \EEst{Y}{X_{-i}}$ where $Y=f(X_1,\dots,X_n)$. Note that, if $S\subseteq[n]\setminus\{i\}$, then
    \[E_S = \EEst{Y}{X_S} = \EEst{\EEst{Y}{X_{-i}}}{X_S} = \EEst{E_{-i}}{X_S}.\]
    Now apply the result of the lemma, with $n-1$ in place of $n$, and with $E_{-i}$ in place of $Y$: we obtain
    \begin{equation}\label{eqn:induction_step__lem:sum_var_subset}\sum_{\substack{S\subseteq[n]\setminus\{i\}\\|S|=s}} \textnormal{Var}(E_S) \leq \binom{n-2}{s-1}\cdot \textnormal{Var}(E_{-i}).\end{equation}
    Next we sum over all $i$. For the right-hand side of~\eqref{eqn:induction_step__lem:sum_var_subset} we calculate
    \begin{align*}
        \sum_{i=1}^n \textnormal{Var}(E_{-i}) 
        &=\sum_{i=1}^n \left(\textnormal{Var}(Y) - \EE{\textnormal{Var}(Y\mid X_{-i})}\right)\textnormal{ by the Law of Total Variance}\\
        &=n\textnormal{Var}(Y) - \sum_{i=1}^n\EE{\textnormal{Var}(Y\mid X_{-i})}\\
        &\leq (n-1)\textnormal{Var}(Y),
    \end{align*}
    where the last step holds by the Efron--Stein inequality
   \citep[Theorem 3.1]{boucheron2013concentration}, which tells us that
\[
\textnormal{Var}(f(X_1,\dots,X_n))\leq \sum_{i=1}^n\EE{\textnormal{Var}(f(X_1,\dots,X_n) \mid X_{-i})}.
\]
    On the other hand, for the left-hand side of~\eqref{eqn:induction_step__lem:sum_var_subset}, we have
    \[\sum_{i=1}^n \sum_{\substack{S\subseteq[n]\setminus\{i\}\\|S|=s}} \textnormal{Var}(E_S) = \sum_{\substack{S\subseteq[n]\\|S|=s}}\left(\sum_{i=1}^n\one{i\not\in S}\right)\textnormal{Var}(E_S) = \sum_{\substack{S\subseteq[n]\\|S|=s}}\textnormal{Var}(E_S) \cdot (n-s). \]
    Returning to~\eqref{eqn:induction_step__lem:sum_var_subset}, then, we have shown that
    \[\sum_{\substack{S\subseteq[n]\\|S|=s}}\textnormal{Var}(E_S) \cdot (n-s) \leq \binom{n-2}{s-1}\cdot (n-1)\textnormal{Var}(Y).\]
    After simplifying, this proves the desired claim.
    \end{proof}

\section{Extension to Hilbert space-valued outputs}
\label{sec:HSExtension}

In this section, we extend our main stability guarantee to avoid requiring that the algorithm~$\alg$ returns real-valued (i.e., one-dimensional) outputs. In particular, let $\cH$ be a separable Hilbert space---this includes examples such as $\R^d$ (for multivariate outputs) or $L_2(\R)$ (i.e., square-integrable functions $f:\R\to\R$, for function-valued outputs). Let $\cW\subseteq\cH$ be a convex and closed subset, and define its radius
\[\textnormal{rad}_\cH(\cW) = \inf_{w\in\cW}\sup_{w'\in\cW}\|w-w'\|_\cH.\]
We will assume that $\alg$ returns outputs in $\cW$, i.e., we can write the algorithm as a map $\alg:\cZ\to\cW$. Our goal is to examine its stability,
\[\beta^2_P(\alg) = \Ep{(Z,Z')\sim P}{\|\alg(Z)-\alg(Z')\|^2_\cH},\]
where now we measure the change in the output using the Hilbert space norm $\|\cdot\|_\cH$.

The work of \citet{soloff2024stability} establishes the stability properties of bagging in the setting of a Hilbert-space-valued output. We now show an analogous result working in our general framework:
\begin{theorem}[Extension of Theorem~\ref{thm:main} to Hilbert-space-valued output]\label{thm:main_hilbert}
    Let $\cH$ be a separable Hilbert space and let $\cW\subseteq\cH$ be a convex and closed subset with $\textnormal{rad}_\cH(\cW)<\infty$. For any $\alg:\cZ\to\cW$, it holds that
    \[\beta^2_P(\alg_Q) \leq \textnormal{rad}_\cH(\cW)^2 \cdot \|T_{P,Q}\|_{L_\infty(\cH)\to L_1(\cH)}.\]
\end{theorem}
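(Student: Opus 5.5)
Mirror the proof of Theorem~\ref{thm:main}, with the centering at $\frac{1}{2}$ replaced by centering at a near-center of $\cW$, and the scalar product replaced by the Hilbert inner product. The norm $\|T_{P,Q}\|_{L_\infty(\cH)\to L_1(\cH)}$ is presumably defined by applying $T_{P,Q}$ coordinatewise to functions $f:\cZ\to\cH$ (well defined for Bochner-integrable $f$), setting $[T_{P,Q}f](z)=\int_\cZ K_{P,Q}(z,z')f(z')\,\mathsf{d}\mu(z')\in\cH$. The norms are $\|f\|_{L_\infty(\cH)}=\sup_z\|f(z)\|_\cH$ and $\|g\|_{L_1(\cH)}=\int_\cZ\|g(z)\|_\cH\,\mathsf{d}\mu(z)$.

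\textbf{Step 1 (centering).} Fix $\epsilon>0$ and choose $w_0\in\cW$ with $\sup_{w'\in\cW}\|w_0-w'\|_\cH\le \textnormal{rad}_\cH(\cW)+\epsilon$. Define $f(z)=\alg(z)-w_0$, so that $\|f\|_{L_\infty(\cH)}\le \textnormal{rad}_\cH(\cW)+\epsilon$. Because $Q(\cdot\mid z)$ is a probability measure, $\alg_Q(z)=f_Q(z)+w_0$, where $f_Q(z)=\Ep{Z\sim Q(\cdot\mid z)}{f(Z)}$ is a Bochner integral. Boundedness of $f$ and separability of $\cH$ (Pettis measurability) make this well defined. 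Convexity and closedness of $\cW$ guarantee $\alg_Q(z)\in\cW$, though this is not actually needed for the bound. It follows that $\alg_Q(Z_0)-\alg_Q(Z_1)=f_Q(Z_0)-f_Q(Z_1)$.

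\textbf{Step 2 (Hilbert analogue of Lemma~\ref{lem:operator_avg}).} Show that
\[\int_\cZ \langle f(z),[T_{P,Q}f](z)\rangle_\cH\,\mathsf{d}\mu(z)=\Ep{P}{\|f_Q(Z_0)-f_Q(Z_1)\|_\cH^2}.\]
There are two routes. One is to expand in an orthonormal basis $(e_k)$ of $\cH$, apply Lemma~\ref{lem:operator_avg} to each scalar function $f_k=\langle f,e_k\rangle$, and sum over $k$. The other is to repeat the Fubini computation in the proof of Lemma~\ref{lem:operator_avg} directly with inner products. Either way, the left side equals $\beta^2_P(\alg_Q)$.

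\textbf{Step 3 (conclusion).} Apply Cauchy--Schwarz pointwise in $\cH$:
\[\beta^2_P(\alg_Q)\le \int_\cZ\|f(z)\|_\cH\,\|[T_{P,Q}f](z)\|_\cH\,\mathsf{d}\mu(z)\le \|f\|_{L_\infty(\cH)}\|T_{P,Q}f\|_{L_1(\cH)}\le \|f\|_{L_\infty(\cH)}^2\|T_{P,Q}\|_{L_\infty(\cH)\to L_1(\cH)}.\]
The last inequality uses homogeneity of the operator norm. This gives the bound $(\textnormal{rad}_\cH(\cW)+\epsilon)^2\|T_{P,Q}\|$, and letting $\epsilon\to0$ completes the proof.

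\textbf{Main obstacle.} The technical point is justifying the interchange of the $\mu$-integrals, the $P$-expectation and the $\cH$-valued integration in Step~2. I would handle it with the coordinate route. Each $f_k$ is bounded, so Lemma~\ref{lem:operator_avg} applies to it. Summing the resulting identities over $k$ is justified by monotone convergence on the right-hand side, since each term $\Ep{P}{(f_{k,Q}(Z_0)-f_{k,Q}(Z_1))^2}$ is nonnegative. On the left-hand side I would use dominated convergence, with dominating function $\|f(z)\|_\cH\int|K_{P,Q}(z,z')|\,\|f(z')\|_\cH\,\mathsf{d}\mu(z')$. This is integrable because $|K_{P,Q}(z,z')|\le \Ep{P}{|Q(z\mid Z_0)-Q(z\mid Z_1)|\,|Q(z'\mid Z_0)-Q(z'\mid Z_1)|}$, and integrating both variables gives at most $4$. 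If Grothendieck's inequality is what relates the Hilbert-valued norm to the scalar one, it is needed only for comparing $\|T_{P,Q}\|_{L_\infty(\cH)\to L_1(\cH)}$ with $\|T_{P,Q}\|_{L_\infty\to L_1}$, which lies outside this statement.
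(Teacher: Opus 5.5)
Your proposal is correct and follows essentially the same route as the paper: center $\alg$ at a (near-)center of $\cW$, use the Hilbert-valued analogue of Lemma~\ref{lem:operator_avg} to write $\beta^2_P(\alg_Q)=\int\langle f,T_{P,Q}f\rangle_\cH\,\mathsf{d}\mu$, then apply pointwise Cauchy--Schwarz and the operator norm. The differences are minor. The paper takes an exact minimizer $w^*$ without comment; it exists for closed, convex, bounded $\cW$ by weak compactness, whereas your $\epsilon$-approximate center makes that unnecessary. The paper also proves the Hilbert lemma by a direct Fubini computation with inner products, rather than your coordinatewise summation with monotone and dominated convergence.
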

\noindent Essentially, the result of Theorem~\ref{thm:main_hilbert} is a direct extension of the real-valued-case: in particular, we recover the result of Theorem~\ref{thm:main} as a special case by taking $\cH=\R$ and $\cW=[0,1]$, since in this case we have $\textnormal{rad}_\cH(\cW) = 1/2$.

In the result above, the operator $T_{P,Q}$ is defined exactly as before, but its norm is now defined with respect to $\cH$-valued functions. Specifically, let $f:\cZ\to\cH$ be bounded and measurable. We define
\[
[T_{P,Q}f](z) = \int_{\cZ} K_{P,Q}(z,z') f(z')\;\mathsf{d}\mu(z'),
\]
as before, where now the returned function $T_{P,Q}f$ is also $\cH$-valued.
Then write
\[\|T_{P,Q}\|_{L_\infty(\cH)\to L_1(\cH)} = \sup\left\{\|T_{P,Q}f\|_{L_1(\cH)} : \|f\|_{L_\infty(\cH)} \leq 1\right\},\]
where for any measurable $f:\cZ\to\cH$ we define $\|f\|_{L_1(\cH)} = \int_{\cZ} \|f(z)\|_\cH\;\mathsf{d}\mu(z)$, and $\|f\|_{L_\infty(\cH)} = \sup_{z \in \cZ} \|f(z)\|_\cH$.  We show in Proposition~\ref{Prop:Grothendieck} that by an extension of Grothendieck's inequality \citep{grothendieck1953sur}, 
\[\|T_{P,Q}\|_{L_\infty(\cH)\to L_1(\cH)} \leq C_{\mathrm{G}} \|T_{P,Q}\|_{L_\infty\to L_1},\]
where $C_{\mathrm{G}}$ denotes Grothendieck's constant, known to satisfy $1.67 < C_{\mathrm{G}} < 1.79$. In particular, this means that the stability bound shown in Theorem~\ref{thm:main_hilbert} is (up to this mild constant) no worse than the bound for real-valued algorithms, given in Theorem~\ref{thm:main}.
\begin{proof}[Proof of Theorem~\ref{thm:main_hilbert}]
Let $w^*\in\cW$ attain the infimum, $\inf_{w\in\cW}\sup_{w'\in\cW}\|w-w'\|_\cH$, so that
\[\textnormal{rad}_\cH(\cW) = \sup_{w\in\cW}\|w-w^*\|_\cH.\]
Define
 $f(z) = \alg(z) - w^*$. 
Note that $\alg_Q(z) = f_Q(z)+w^* $ for all $z$, by construction. 

From this point on the proof is identical to that of Theorem~\ref{thm:main}, except that we work in the Hilbert space $\cH$. We calculate
\begin{align*}
    \beta^2_P(\alg_Q)
    &=\Ep{P}{\left\|\alg_Q(Z_0)-\alg_Q(Z_1)\right\|_\cH^2}\\
    &=\Ep{P}{\left\langle f_Q(Z_0)-f_Q(Z_1), f_Q(Z_0)-f_Q(Z_1)\right\rangle}\\
    &= \int_{\cZ} \left\langle f(z),  [T_{P,Q}f](z)\right\rangle \;\mathsf{d}\mu(z)\textnormal{ by Lemma~\ref{lem:operator_avg_hilbert} (applied with $f=g$)}\\
    &\leq \|f\|_{L_\infty(\cH)}\|T_{P,Q}f\|_{L_1(\cH)}
    \leq \|f\|_{L_\infty(\cH)}^2\|T_{P,Q}\|_{L_\infty(\cH)\to L_1(\cH)}\\
    &\leq \textnormal{rad}_\cH(\cW)^2\cdot \|T_{P,Q}\|_{L_\infty(\cH)\to L_1(\cH)},
\end{align*}
where the last step holds since $f$ takes values in $\{w - w^* : w\in\cW\}$ and therefore $\|f\|_{L_\infty(\cH)}\leq \textnormal{rad}_\cH(\cW)$, by construction.
\end{proof}

We can also take a relaxation of this result, as for the real-valued case:
\begin{corollary}\label{cor:main_hilbert_L2}
    Let $\psi$ be any density with respect to the base measure $\mu$, with $\psi(z)>0$ for all $z$.
    Then
    for any $\alg:\cZ\to\cW$, it holds that
    \[\beta^2_P(\alg_Q) \leq \textnormal{rad}_\cH(\cW)^2\cdot \|\tilde{T}^\psi_{P,Q}\|_{L_2\to L_2},\]
    where we define the operator $\tilde{T}^\psi_{P,Q}$ as
    \[[\tilde{T}^\psi_{P,Q}f](z) = \int_\cZ \frac{K_{P,Q}(z,z')}{\sqrt{\psi(z)\psi(z')}} \cdot f(z')\;\mathsf{d}\mu(z'). \]
\end{corollary}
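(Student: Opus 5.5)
The plan is to mirror the proof of Corollary~\ref{cor:main_L2}, this time with $\cH$-valued test functions, and then combine the result with Theorem~\ref{thm:main_hilbert}. It suffices to show
\[\|T_{P,Q}\|_{L_\infty(\cH)\to L_1(\cH)} \leq \|\tilde{T}^\psi_{P,Q}\|_{L_2\to L_2}.\]

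The first step is the Hilbert-space analogue of~\eqref{eqn:T_norm_as_sup}: the norm $\|T_{P,Q}\|_{L_\infty(\cH)\to L_1(\cH)}$ should equal $\sup \int\int K_{P,Q}(z,z')\langle f(z),f(z')\rangle\,\mathsf{d}\mu\,\mathsf{d}\mu$ over $\|f\|_{L_\infty(\cH)}\le 1$. By duality,
\[\|T_{P,Q}f\|_{L_1(\cH)}=\sup_{\|g\|_{L_\infty(\cH)}\le1}\int\int K_{P,Q}(z,z')\langle g(z),f(z')\rangle .\]
Because $K_{P,Q}$ is positive semidefinite, the bilinear form $(g,f)\mapsto \int\int K\langle g,f\rangle$ is a positive semidefinite form, so Cauchy--Schwarz for it gives
\[\int\int K\langle g,f\rangle\le \Big(\int\int K\langle g,g\rangle\Big)^{1/2}\Big(\int\int K\langle f,f\rangle\Big)^{1/2}.\]
Hence the sup over pairs equals the sup over diagonal choices $g=f$. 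This justification of positive semidefiniteness in the vector-valued setting is the main obstacle. I would establish it by expanding $f=\sum_k f_k e_k$ in an orthonormal basis of the separable space $\cH$, which writes the diagonal form as a sum of scalar quadratic forms, each nonnegative. Equivalently, one can use the representation from Lemma~\ref{lem:operator_avg_hilbert}, which gives $\Ep{P}{\|f_Q(Z_0)-f_Q(Z_1)\|_\cH^2}\ge0$ directly.

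Next, for fixed $f$ with $\|f\|_{L_\infty(\cH)}\le1$, expand in the basis: $\int\int K\langle f(z),f(z')\rangle=\sum_k \int\int K f_k(z)f_k(z')$. Set $\tilde f_k=\sqrt{\psi}\,f_k$. As in the proof of Corollary~\ref{cor:main_L2}, each term equals $\langle \tilde f_k,\tilde{T}^\psi_{P,Q}\tilde f_k\rangle_{L_2}\le \|\tilde{T}^\psi_{P,Q}\|_{L_2\to L_2}\|\tilde f_k\|_{L_2}^2$. Summing over $k$ (Fubini/monotone convergence, justified by boundedness) gives
\[\sum_k\|\tilde f_k\|_{L_2}^2=\int\psi(z)\|f(z)\|_\cH^2\,\mathsf{d}\mu(z)\le1.\]
This yields the claimed operator-norm comparison, and plugging it into Theorem~\ref{thm:main_hilbert} finishes the proof.
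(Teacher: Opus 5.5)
Your proposal is correct and follows the same route as the paper: identify $\|T_{P,Q}\|_{L_\infty(\cH)\to L_1(\cH)}$ with the supremum of the quadratic form $\int\int K_{P,Q}(z,z')\langle f(z),f(z')\rangle_\cH$, rescale by $\sqrt{\psi}$ exactly as in Corollary~\ref{cor:main_L2}, and combine with Theorem~\ref{thm:main_hilbert}. Your orthonormal-basis expansion makes explicit the paper's one-line appeal to ``properties of the Hilbert norm'' for $\|\tilde{T}^\psi_{P,Q}\|_{L_2(\cH)\to L_2(\cH)} = \|\tilde{T}^\psi_{P,Q}\|_{L_2\to L_2}$; note that swapping $\sum_k$ with the double integral needs dominated convergence rather than monotone convergence, which works because the partial sums $\sum_{k\le N} f_k(z)f_k(z')$ are bounded by $1$ and $K_{P,Q}$ is $\mu\otimes\mu$-integrable.
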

\noindent Note that, once we relax to the $L_2\to L_2$ norm, there is no price to pay for extending from real-valued to Hilbert-valued functions: the quantity $\|\tilde{T}^\psi_{P,Q}\|_{L_2\to L_2}$ appearing in the bound is the same norm as for the real-valued case.
\begin{proof}[Proof of Corollary~\ref{cor:main_hilbert_L2}]
    Exactly as in the proof of Corollary~\ref{cor:main_L2}, we bound
    \[\|T_{P,Q}\|_{L_\infty(\cH)\to L_1(\cH)} \leq \|\tilde{T}^\psi_{P,Q}\|_{L_2(\cH)\to L_2(\cH)}.\]
    And, by properties of the Hilbert norm, we also have
    \[\|\tilde{T}^\psi_{P,Q}\|_{L_2(\cH)\to L_2(\cH)} = \|\tilde{T}^\psi_{P,Q}\|_{L_2\to L_2}.\]
\end{proof}

\begin{lemma}[Extension of Lemma~\ref{lem:operator_avg} to Hilbert-space-valued output]\label{lem:operator_avg_hilbert}
    Let $f,g:\cZ\to\cH$ be bounded functions. Define their ensembled versions
    \[f_Q(z) = \Ep{Z\sim Q(\cdot\mid z)}{f(Z)}, \quad g_Q(z) = \Ep{Z\sim Q(\cdot\mid z)}{g(Z)}.\]
    Then
    \[\int_{\cZ} \left\langle f(z),[T_{P,Q}g](z)\right\rangle \;\mathsf{d}\mu(z) = \Ep{P}{\left\langle f_Q(Z_0)-f_Q(Z_1),g_Q(Z_0)-g_Q(Z_1)\right\rangle}.\]
\end{lemma}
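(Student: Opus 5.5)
The plan is to repeat the proof of Lemma~\ref{lem:operator_avg} line by line, with the product of scalars $f(z)g(z')$ replaced by the inner product $\langle f(z), g(z')\rangle$ and with integrals of $\cH$-valued functions understood as Bochner integrals. Since $\cH$ is separable and $f,g$ are bounded and measurable, they are Bochner measurable. Moreover $\int_\cZ Q(z\mid z_0)\,\mathsf{d}\mu(z)=1$, so every integral below converges absolutely in norm.

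First, expand the left-hand side. By definition,
\[
[T_{P,Q}g](z)=\int_\cZ K_{P,Q}(z,z')\,g(z')\;\mathsf{d}\mu(z').
\]
A bounded linear functional commutes with a Bochner integral, so
\[
\langle f(z),[T_{P,Q}g](z)\rangle=\int_\cZ K_{P,Q}(z,z')\,\langle f(z),g(z')\rangle\;\mathsf{d}\mu(z').
\]
Substitute the definition of $K_{P,Q}$ as an expectation over $(Z_0,Z_1)\sim P$ and interchange the order of integration by Fubini's theorem. The integrand is bounded in absolute value by
\[
\|f\|_{L_\infty(\cH)}\,\|g\|_{L_\infty(\cH)}\,\big(Q(z\mid Z_0)+Q(z\mid Z_1)\big)\big(Q(z'\mid Z_0)+Q(z'\mid Z_1)\big),
\]
and this bound has integral at most $4\|f\|_{L_\infty(\cH)}\|g\|_{L_\infty(\cH)}$. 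The left-hand side therefore becomes
\[
\mathbb{E}_P\left[\int\!\!\int \big(Q(z\mid Z_0)-Q(z\mid Z_1)\big)\big(Q(z'\mid Z_0)-Q(z'\mid Z_1)\big)\langle f(z),g(z')\rangle\;\mathsf{d}\mu(z)\;\mathsf{d}\mu(z')\right].
\]

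Next, factor the inner double integral using bilinearity of the inner product together with linearity of Bochner integration in each argument. It equals
\[
\left\langle \int_\cZ \big(Q(z\mid Z_0)-Q(z\mid Z_1)\big)f(z)\;\mathsf{d}\mu(z),\ \int_\cZ \big(Q(z'\mid Z_0)-Q(z'\mid Z_1)\big)g(z')\;\mathsf{d}\mu(z')\right\rangle.
\]
To justify pulling both integrals out of the inner product, pull one integral out at a time; each step is the continuity of $\langle\cdot,v\rangle$ applied to a Bochner integral. Finally, by definition of the Bochner expectation, $\int_\cZ Q(z\mid z_0)f(z)\,\mathsf{d}\mu(z)=f_Q(z_0)$, and likewise for $g$. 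The inner product therefore equals $\langle f_Q(Z_0)-f_Q(Z_1),\,g_Q(Z_0)-g_Q(Z_1)\rangle$, which is the claimed identity.

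The only step that is not purely formal is the measure-theoretic bookkeeping: checking that the Fubini interchange and the factoring of the inner product hold for vector-valued integrands. If this is the sticking point, one can reduce to the scalar Lemma~\ref{lem:operator_avg}. Take an orthonormal basis $(e_k)$ of $\cH$ and apply the scalar lemma to the pair of functions $\langle f,e_k\rangle$ and $\langle g,e_k\rangle$ for each $k$. Summing over $k$ recovers the identity, provided the sum and the integrals can be exchanged. Dominated convergence justifies this exchange, since
\[
\sum_k |\langle a,e_k\rangle\langle b,e_k\rangle| \le \|a\|_\cH\,\|b\|_\cH
\]
for any $a,b\in\cH$, and the integrals involved are finite by the same bound as above. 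This basis reduction is the route I would fall back on first.
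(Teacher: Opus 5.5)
Your proposal is correct and follows the paper's proof essentially step for step: expand $T_{P,Q}g$, move $f(z)$ inside via the inner product, substitute the definition of $K_{P,Q}$, interchange with $\mathbb{E}_P$ by Fubini, factor the double integral into an inner product of two $\cH$-valued integrals, and identify these as $f_Q(Z_0)-f_Q(Z_1)$ and $g_Q(Z_0)-g_Q(Z_1)$. Your explicit domination bound (total mass at most $4\|f\|_{L_\infty(\cH)}\|g\|_{L_\infty(\cH)}$) and the Bochner-integral bookkeeping make rigorous what the paper compresses into ``Fubini's theorem (applied in the separable Hilbert space $\cH$)''; the orthonormal-basis reduction is a valid alternative but is not needed.
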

\begin{proof}[Proof of Lemma~\ref{lem:operator_avg_hilbert}] The proof is identical to that of Lemma~\ref{lem:operator_avg}, except that we now work with inner products in the separable Hilbert space $\cH$. We calculate
    \begin{align*}
    &\int_{\cZ} \left\langle f(z),[T_{P,Q}g](z)\right\rangle\;\mathsf{d}\mu(z)\\
    &=\int_{\cZ} \left\langle f(z) ,\int_{\cZ} K_{P,Q}(z,z')\cdot g(z')\;\mathsf{d}\mu(z')\right\rangle\;\mathsf{d}\mu(z)\\
    &=\int_{\cZ} \int_{\cZ} \Ep{P}{\big(Q(z\mid Z_0) - Q(z\mid Z_1)\big)\cdot \big(Q(z'\mid Z_0) - Q(z'\mid Z_1)\big)}\cdot \langle f(z),g(z')\rangle\;\mathsf{d}\mu(z')\;\mathsf{d}\mu(z)\\
    &=\Ep{P}{\int_{\cZ} \int_{\cZ} \big(Q(z\mid Z_0) - Q(z\mid Z_1)\big)\cdot \big(Q(z'\mid Z_0) - Q(z'\mid Z_1)\big)\cdot \langle f(z),g(z')\rangle\;\mathsf{d}\mu(z')\;\mathsf{d}\mu(z)}\\
    &=\Ep{P}{\left\langle \int_{\cZ} \big(Q(z\mid Z_0) - Q(z\mid Z_1)\big) f(z)\;\mathsf{d}\mu(z) , \int_{\cZ} \big(Q(z'\mid Z_0) - Q(z'\mid Z_1)\big) g(z')\;\mathsf{d}\mu(z')\right\rangle }\\
    &=\Ep{P}{\left\langle f_Q(Z_0)-f_Q(Z_1),g_Q(Z_0)-g_Q(Z_1)\right\rangle},
\end{align*}
where the second step holds by definition of $K_{P,Q}$, the third step holds by Fubini's theorem (applied in the separable Hilbert space $\cH$), and the last step holds by definition of $f_Q,g_Q$.
\end{proof}

The following lemma concerns an extension of the simplest, finite-dimensional form of Grothendieck's inequality to separable Hilbert spaces; since the proof is self-contained and direct, we provide it for the reader's convenience.   
\begin{proposition}
\label{Prop:Grothendieck}
We have
\[
\|T_{P,Q}\|_{L_\infty(\mathcal{H}) \rightarrow L_1(\mathcal{H})} \leq C_{\mathrm{G}} \|T_{P,Q}\|_{L_\infty \rightarrow L_1}.
\]
\end{proposition}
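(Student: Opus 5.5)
The plan is to reduce to the finite-dimensional Grothendieck inequality for positive semidefinite kernels and then pass to the limit. Since $K_{P,Q}$ is positive semidefinite, the same reasoning that gives \eqref{eqn:T_norm_as_sup} yields the Hilbert-valued analogue
\[
\|T_{P,Q}\|_{L_\infty(\cH)\to L_1(\cH)} = \sup\left\{\int_\cZ\int_\cZ K_{P,Q}(z,z')\langle f(z),g(z')\rangle\,\mathsf{d}\mu(z)\,\mathsf{d}\mu(z') : \|f\|_{L_\infty(\cH)},\|g\|_{L_\infty(\cH)}\le 1\right\}.
\]
To see this, take $g$ fixed and choose $f(z)=[T_{P,Q}g](z)/\|[T_{P,Q}g](z)\|_\cH$, which attains the $L_1(\cH)$ norm. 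By Lemma~\ref{lem:operator_avg_hilbert}, this bilinear form equals $\Ep{P}{\langle f_Q(Z_0)-f_Q(Z_1),g_Q(Z_0)-g_Q(Z_1)\rangle}$. Cauchy--Schwarz then lets us restrict to $f=g$, as in \eqref{eqn:T_norm_as_sup}. The real-valued norm $\|T_{P,Q}\|_{L_\infty\to L_1}$ is the same supremum taken over scalar $f,g$ with values in $[-1,1]$.

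The next step is discretization. Fix $f,g:\cZ\to\cH$ bounded by $1$. Because $\cH$ is separable and $f,g$ are measurable, we can approximate them in $L_\infty(\cH)$ up to $\epsilon$ by simple functions taking finitely many values in the unit ball of $\cH$, on a common finite measurable partition $A_1,\dots,A_N$ of $\cZ$. This works because the unit ball is separable: cover it by countably many $\epsilon$-balls, truncate to a finite union carrying all but a small $\mu$-weighted mass, and handle the remainder using integrability. The bilinear form is continuous under this approximation provided $\iint|K_{P,Q}|\,\mathsf{d}\mu\,\mathsf{d}\mu<\infty$. That finiteness holds because $|K_{P,Q}(z,z')|\le \mathbb{E}_P[(Q(z\mid Z_0)+Q(z\mid Z_1))(Q(z'\mid Z_0)+Q(z'\mid Z_1))]$, whose double integral is at most $4$. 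For such simple functions the form becomes $\sum_{k,l} a_{kl}\langle u_k,v_l\rangle$, where $a_{kl}=\int_{A_k}\int_{A_l}K_{P,Q}\,\mathsf{d}\mu\,\mathsf{d}\mu$ and $u_k,v_l$ lie in the unit ball of $\cH$. The vectors $u_k,v_l$ span a subspace of dimension at most $2N$, so we may take them in $\R^{2N}$.

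Then apply the classical Grothendieck inequality to the finite matrix $(a_{kl})$:
\[
\sum_{k,l} a_{kl}\langle u_k,v_l\rangle\le C_{\mathrm{G}}\sup_{s_k,t_l\in\{\pm1\}}\sum_{k,l}a_{kl}s_kt_l.
\]
The right-hand supremum is exactly the scalar bilinear form evaluated at simple functions $f=\sum_k s_k\mathbf{1}_{A_k}$ and $g=\sum_l t_l\mathbf{1}_{A_l}$, so it is at most $\|T_{P,Q}\|_{L_\infty\to L_1}$. Letting $\epsilon\to0$ and taking the supremum over $f,g$ gives the claim. Because the paper advertises a self-contained proof, I would alternatively prove the finite-dimensional inequality directly via Krivine's Gaussian rounding argument, which gives the constant $\pi/(2\log(1+\sqrt2))$. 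However, citing Grothendieck's inequality with its optimal constant $C_{\mathrm{G}}$ is cleanest. The main obstacle is the measure-theoretic approximation step, that is, justifying that vector-valued $f,g$ can be replaced by finitely valued simple functions without changing the bilinear form by more than $O(\epsilon)$. The integrability bound on $|K_{P,Q}|$ together with dominated convergence should handle it.
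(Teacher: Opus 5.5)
Your proof follows the paper's route. You restrict to finitely-valued simple functions, form $a_{kl}=\int_{A_k}\int_{A_l}K_{P,Q}$, and bound its scalar form over sign vectors by $\|T_{P,Q}\|_{L_\infty\to L_1}$. You then invoke the finite-dimensional Grothendieck inequality and pass to the limit using integrability of $|K_{P,Q}|$. Your explicit bound $\iint|K_{P,Q}|\,\mathsf{d}\mu\,\mathsf{d}\mu\le 4$ is a useful addition: the paper's dominated-convergence step needs it but never states it.

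The approximation step, which you rightly flagged, is misstated as written, though it can be repaired.

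\begin{itemize}
\item A measurable $f$ with $\|f\|_{L_\infty(\cH)}\le 1$ cannot in general be approximated uniformly within $\epsilon$ by finitely-valued functions when $\cH$ is infinite-dimensional. If the range of $f$ contains an infinite orthonormal sequence, those values are pairwise $\sqrt2$ apart. So no finite set of vectors is within $\epsilon<\sqrt2/2$ of all of them.
\item Your fallback of discarding pieces of small $\mu$-mass also fails when $\mu(\cZ)=\infty$, e.g.\ Lebesgue measure on $\R^n$ in Example~\ref{exampleB}.
\item The fallback does work if mass is measured by the finite measure $k\,\mathsf{d}\mu$, where $k(z)=\int_{\cZ}|K_{P,Q}(z,z')|\,\mathsf{d}\mu(z')$. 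Here $\int k\,\mathsf{d}\mu\le 4$. By symmetry of $K_{P,Q}$, the change in the bilinear form is then $O(\epsilon)$ plus a constant multiple of the $k\,\mathsf{d}\mu$-mass of the discarded pieces.
\end{itemize}

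The paper avoids this bookkeeping. It projects onto $\mathrm{span}(e_1,\dots,e_n)$, whose unit ball is compact and so has a finite $1/n$-net. This gives finitely-valued $F_n$ with $\|F_n(z)\|_{\cH}\le 1$ and $F_n\to F$ pointwise. Dominated convergence with dominating function $|K_{P,Q}|$, which is integrable by your bound, then finishes the argument.
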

\begin{proof}
First define functions $F,G:\mathcal{Z} \rightarrow \mathcal{H}$ that take only finitely many values via
\[
F(z') = \sum_{j=1}^n u_j \mathbbm{1}_{A_j}(z'), \quad G(z) = \sum_{i=1}^n v_i \mathbbm{1}_{B_i}(z),
\]
where $A_1,\ldots,A_n$ are measurable, pairwise disjoint subsets of $\mathcal{Z}$, as are $B_1,\ldots,B_n$, and both $\max_{j \in [n]} \|u_j\|_{\mathcal{H}} \leq 1$ and $\max_{i \in [n]} \|v_i\|_{\mathcal{H}} \leq 1$.  Thus $\|F\|_{L_\infty(\mathcal{H})}, \|G\|_{L_\infty(\mathcal{H})} \leq 1$.  Define $A = (a_{ij}) \in \mathbb{R}^{n \times n}$ by
\[
a_{ij} = \int_{B_i} \int_{A_j} K_{P,Q}(z,z') \; \mathsf{d}\mu(z') \; \mathsf{d}\mu(z).
\]
Now let $s_1,\ldots,s_n,t_1,\ldots,t_n \in [-1,1]$, and define simple functions $f,g:\mathcal{Z} \rightarrow \mathbb{R}$ by
\[
f(z') = \sum_{j=1}^n s_j \mathbbm{1}_{A_j}(z'), \quad g(z) = \sum_{i=1}^n t_i \mathbbm{1}_{B_i}(z).
\]
Then $\|f\|_\infty, \|g\|_\infty \leq 1$, so 
\[
\biggl|\sum_{i=1}^n \sum_{j=1}^n a_{ij} s_jt_i\biggr| = \biggl|\int_{\mathcal{Z}} \int_{\mathcal{Z}} K_{P,Q}(z,z') f(z')g(z) \;\mathsf{d}\mu(z') \; \mathsf{d}\mu(z)\biggr| \leq \|T_{P,Q}\|_{L_\infty \rightarrow L_1}.
\]
It follows by the finite-dimensional Grothendieck inequality \citep[e.g.][Theorem~1.1]{pisier2012}, that
\[
\biggl|\int_{\mathcal{Z}} \int_{\mathcal{Z}} K_{P,Q}(z,z')\langle F(z'), G(z) \rangle_{\mathcal{H}} \; \mathsf{d}\mu(z') \; \mathsf{d}\mu(z) \biggr| = \biggl|\sum_{i=1}^n \sum_{j=1}^n a_{ij} \langle u_j,v_i \rangle_{\mathcal{H}}\biggr| \leq C_{\mathrm{G}} \|T_{P,Q}\|_{L_\infty \rightarrow L_1}.
\]
Now let $F:\mathcal{Z} \rightarrow \mathcal{H}$ be measurable with $\|F\|_{L_\infty(\mathcal{H})} \leq 1$.  Let $(e_\ell)_{\ell \in \mathbb{N}}$ denote an orthonormal basis for $\mathcal{H}$, so that we may write
\[
F(z) = \sum_{\ell=1}^\infty \lambda_\ell(z)e_\ell,
\]
where $\sup_{z \in \mathcal{Z}} \sum_{\ell=1}^\infty \lambda_\ell^2(z) \leq 1$.  
Now let $\mathcal{H}_n = \mathrm{span}(e_1,\ldots,e_n)$, let $B_{\mathcal{H}_n} = \{x \in \mathcal{H}_n:\|x\|_{\mathcal{H}} \leq 1\}$, and let $\{u_{n,1},\ldots,u_{n,N_n}\} \subseteq \mathcal{H}_n$ denote a $(1/n)$-net for $B_{\mathcal{H}_n}$.  Thus, given any $x \in B_{\mathcal{H}_n}$, we can find $r \in [N_n]$ such that $\|x - u_{n,r}\|_{\mathcal{H}} \leq 1/n$.  Define 
\[
E_{n,1} = \biggl\{z \in \mathcal{Z}:\biggl\|\sum_{\ell=1}^n \lambda_\ell(z) e_\ell - u_{n,1}\biggr\|_{\mathcal{H}} \leq \frac{1}{n} \biggr\}
\]
and
\[
E_{n,r} = \biggl\{z \in \mathcal{Z}:\biggl\|\sum_{\ell=1}^n \lambda_\ell(z) e_\ell - u_{n,r}\biggr\|_{\mathcal{H}} \leq \frac{1}{n} \biggr\} \setminus \bigcup_{s=1}^{r-1} E_{n,s}
\]
for $r \in [N_n] \setminus \{1\}$.  The sets $(E_{n,r})_{r \in [N_n]}$ form a measurable partition of $\mathcal{Z}$, because we have $\sup_{z \in \mathcal{Z}} \bigl\|\sum_{\ell=1}^n \lambda_\ell(z)e_\ell\bigr\|_{\mathcal{H}}^2 = \sup_{z \in \mathcal{Z}} \sum_{\ell=1}^n \lambda_\ell^2(z) \leq 1$.  Now define a measurable function $F_n:\mathcal{Z} \rightarrow \mathcal{H}$ taking only finitely many values by
\[
F_n(z) = \sum_{r=1}^{N_n} u_{n,r} \mathbbm{1}_{E_{n,r}}(z). 
\]
Then $\sup_{z \in \mathcal{Z}} \|F_n(z)\|_{\mathcal{H}} \leq 1$ and for every $z \in \mathcal{Z}$,
\begin{align*}
\|F_n(z) - F(z)\|_{\mathcal{H}} &\leq \biggl\|\sum_{r=1}^{N_n} u_{n,r} \mathbbm{1}_{E_{n,r}}(z) - \sum_{\ell=1}^n \lambda_\ell(z) e_\ell\biggr\|_{\mathcal{H}} + \biggl\|\sum_{\ell=1}^n \lambda_\ell(z) e_\ell - F(z)\biggr\|_{\mathcal{H}} \\
&\leq \frac{1}{n} + \biggl(\sum_{\ell=n+1}^\infty \lambda_\ell^2(z)\biggr)^{1/2} \rightarrow 0
\end{align*}
as $n \rightarrow \infty$.

Since $|\langle F_n(z'),F_n(z) \rangle_{\mathcal{H}}| \leq  \|F_n(z')\|_{\mathcal{H}}\|F_n(z)\|_{\mathcal{H}} \leq 1$, we may apply dominated convergence to deduce that
\begin{align*}
\biggl|\int_{\mathcal{Z}} \int_{\mathcal{Z}} K_{P,Q}(z,z')\langle F(z'), F(z)\rangle_{\mathcal{H}} \; &\mathsf{d}\mu(z') \; \mathsf{d}\mu(z)\biggr| \\
&= \lim_{n \rightarrow \infty} \biggl|\int_{\mathcal{Z}} \int_{\mathcal{Z}} K_{P,Q}(z,z')\langle F_n(z'), F_n(z)\rangle_{\mathcal{H}} \; \mathsf{d}\mu(z') \; \mathsf{d}\mu(z)\biggr| \\
&\leq C_{\mathrm{G}} \|T_{P,Q}\|_{L_\infty \rightarrow L_1}.
\end{align*}
We conclude that
\begin{align*}
\|T_{P,Q}\|_{L_\infty(\mathcal{H}) \rightarrow L_1(\mathcal{H})} &= \sup\biggl\{\int_{\mathcal{Z}} K_{P,Q}(z,z') \langle F(z'),F(z)\rangle_{\mathcal{H}} \; \mathsf{d}\mu(z') \; \mathsf{d}\mu(z):\|F\|_{L_\infty(\mathcal{H})} \leq 1\biggr\} \\
&\leq C_{\mathrm{G}} \|T_{P,Q}\|_{L_\infty \rightarrow L_1},
\end{align*}
as required.
\end{proof}

\end{document}